\documentclass[a4paper]{amsart}

\usepackage[margin=3cm]{geometry}
\usepackage{palatino}

\usepackage{amsmath,amssymb,mathtools}
\usepackage{xcolor}
\usepackage[colorlinks=true,linkcolor=blue!55!black,citecolor=blue!55!black,
  urlcolor=blue!55!black]{hyperref}

\numberwithin{equation}{section}
\newtheorem{theorem}{Theorem}[section]
\newtheorem{proposition}[theorem]{Proposition}
\newtheorem{corollary}[theorem]{Corollary}

\newtheorem{definition}[theorem]{Definition}
\newtheorem{example}[theorem]{Example}
\theoremstyle{remark}
\newtheorem{remark}[theorem]{Remark}

\newcommand{\E}{\mathbb E}
\newcommand{\Pp}{\mathbb P}
\newcommand{\R}{\mathbb R}
\newcommand{\1}{\mathbf 1}
\newcommand{\Var}{\operatorname{Var}}

\title[Quadratic one-cut risk profiles]{One-Cut Risk Profiles under Quadratic Loss:\\
Discrete Convexity, Continuous Limits, and Higher Dimensions}

\author{Mihaela-Adriana Nistor}
\address{Faculty of Mathematics and Computer Science, University of Bucharest,
Bucharest, Romania}
\email{mihaelaadriana.nistor@gmail.com}

\author{Ionel Popescu}
\address{Faculty of Mathematics and Computer Science, University of Bucharest,
Bucharest, Romania; and Institute of Mathematics ``Simion Stoilow'' of the
Romanian Academy, Bucharest, Romania}
\email{ionel.popescu@fmi.unibuc.ro; ionel.popescu@imar.ro}

\date{September 4, 2026}

\subjclass[2020]{60E15, 62H30, 90C25, 91G70}
\keywords{two-regime risk representation, quadratic loss, discrete distribution,
log-concavity, residual life, weak symmetry, threshold optimization,
two-means partition}

\hypersetup{
  pdftitle={One-Cut Risk Profiles under Quadratic Loss: Discrete Convexity, Continuous Limits, and Higher Dimensions},
  pdfauthor={Mihaela-Adriana Nistor and Ionel Popescu},
  pdfsubject={Quadratic two-regime risk profiles for scalar and vector laws},
  pdfkeywords={discrete log-concavity, weak symmetry, residual life, threshold optimization, risk measures}
}

\begin{document}
\raggedbottom

\begin{abstract}
In this note we study a two-regime representation of a loss random variable
under quadratic error.  For a finite law we compute exactly the change of the
optimal risk when one atom crosses the cut.  This turns the problem into a
convexity question in cumulative-mass coordinates.  On an equally spaced
support, log-concavity gives this convexity, while weak symmetry locates the
optimal cut, with an additional correction when the mean lies between two
atoms.  We also discuss the continuous analogue and extend the main identities
to finitely supported random vectors, where a global optimal partition may be
chosen as a halfspace.
\end{abstract}

\maketitle

\section{Introduction}\label{sec:introduction}

There is a large literature on monetary risk measures.  The coherent and
convex theories explain when a random loss can be summarized by one capital
requirement in a financially meaningful way; see
\cite{artzner-delbaen-eber-heath,frittelli-rosazza,foellmer-weber}.  Such a
number is useful, and often necessary, but it does not show whether the loss is
well represented by one typical level or whether two different regimes are
present.

We describe now the object studied in this paper.  Given a loss random
variable $X$, we approximate it by
\[
 r_{a,b,t}(X)=a\1_{\{X\leq t\}}+b\1_{\{X>t\}}
\]
and minimize
\begin{equation}\label{eq:basic-objective}
 \E\bigl[(X-r_{a,b,t}(X))^2\bigr]
\end{equation}
over $a,b$ and the cut point $t$.  For a fixed cut the best representatives are
the two conditional means.  The output consists of the partition, the two
representative levels and the error which remains after the split.  Since two
cuts may give the same value, the optimal representation is naturally
set-valued.  Notice also that the minimized error is not a cash-additive
monetary risk measure.  It is closer to a deviation functional and measures
the information lost when $X$ is described by two levels
\cite{rockafellar-uryasev-zabarankin-2006}.

The main point in the discrete case is that moving a cut across an atom is not
an infinitesimal operation.  The atom leaves one conditional sample and enters
the other one.  We show that the exact variation of the optimized risk is the
mass of the atom multiplied by the difference of two squared residual radii.
After division by the mass, this quantity is the slope of the profile in
cumulative-probability coordinates.  Thus the convexity of the whole profile
is equivalent to a simple monotonicity property of these slopes.

On an equally spaced support, log-concavity of the cumulative and survival
sequences implies the required monotonicity.  A log-concave mass sequence is a
convenient sufficient condition.  This should be compared with the continuous
case, where log-concavity and residual-life monotonicity play the same role
\cite{johnson-goldschmidt,bagnoli-bergstrom}.  Once convexity is available, a
weak symmetry assumption can be used to locate the minimum.  By weak symmetry
we mean only that the mean is also a median; no reflection symmetry of the law
is required.  If the mean is an atom, the two cuts around it are optimal.  If
the mean lies in a gap, an additional boundary-mass condition appears, and it
cannot in general be omitted.

The quadratic problem is also the ordered two-level quantization problem and,
in one dimension, it is equivalent to Otsu's between-class variance criterion
\cite{otsu,fisher-1958,lloyd-1982}.  Trushkin and Kieffer studied related
uniqueness questions for continuous quantization under shape assumptions
\cite{trushkin1982,kieffer1983}.  The purpose here is more specific: we isolate
the correction caused by atoms and the probability-coordinate convexity which
governs the optimal cut.

The atomless version of the problem was considered in
\cite{nistor-popescu-continuous}.  We include its derivative formula in order
to make the comparison with the discrete profile explicit.  We also record a
fine-grid compatibility result.  Finally, for finitely supported random
vectors, conditioning on parallel fibers produces the same marginal identity
for the fiber barycenters.  Nearest-centroid geometry then shows that a global
optimal two-regime partition may be chosen as a halfspace.  These properties
use the Pythagorean identity and are therefore particular to the quadratic
loss.

We finish the introduction with a description of the paper.  In
Section~\ref{sec:scalar-profile} we define the one-cut risk profile and prove
the exact atom update.  Section~\ref{sec:lattice} gives the lattice
log-concavity conditions and explains why the geometry of the support cannot
be ignored.  In Section~\ref{sec:weak-symmetry} we use weak symmetry to locate
the optimal cut and derive the correction needed when the mean lies between
two atoms.  Section~\ref{sec:continuous-benchmark} presents the continuous
counterpart and the discretization remarks.  The vector problem is treated in
Section~\ref{sec:vector-fixed-direction} for a fixed direction and in
Section~\ref{sec:global-halfspace} for arbitrary two-regime partitions.
Section~\ref{sec:vector-examples} contains two examples which show the limits
of the symmetry and direction assumptions.  The last section summarizes the
main conclusions.

\part{Scalar laws: discrete convexity and continuous limits}

\section{The scalar quadratic risk representation}
\label{sec:scalar-profile}

In this section we introduce the scalar profile and compute its variation when
one atom is moved from the right regime to the left one.  This simple identity
will be the main tool in the discrete part of the paper.

Throughout the scalar part, larger values of $X$ are interpreted as larger
losses.  Let $X\in L^2$ be nonconstant.  For a threshold $t$ with two
nonempty regimes, set
\begin{equation}\label{eq:profiled-risk-general}
 \begin{split}
 J_X(a,b;t)&:=\E\bigl[(X-a)^2\1_{\{X\leq t\}}
                    +(X-b)^2\1_{\{X>t\}}\bigr],\\
 W_X(t)&:=\inf_{a,b\in\R}J_X(a,b;t).
 \end{split}
\end{equation}
For a fixed cut the minimizers are unique and are given by
\begin{equation}\label{eq:conditional-regime-levels}
 a_X(t)=\E[X\mid X\leq t],\qquad
 b_X(t)=\E[X\mid X>t].
\end{equation}
We will call $t\mapsto W_X(t)$ the \emph{one-cut quadratic risk profile}.  Let
\[
 \mathcal T_X:=\{t\in\R:0<\mathbb P(X\leq t)<1\}
\]
be its admissible threshold set, and write $\mu_X:=\mathcal L(X)$.  We identify
$s,t\in\mathcal T_X$ when
$\mu_X((\min\{s,t\},\max\{s,t\}])=0$ and denote the resulting class by
$[t]_{\mu_X}$.  The best two-regime risk representations are the parameter
triples
\begin{equation}\label{eq:set-valued-representation}
 \mathfrak R_2(\mu_X):=
 \left\{
 \bigl(a_X(t),b_X(t),[t]_{\mu_X}\bigr):
 t\in\mathcal T_X,\quad W_X(t)=\inf_{s\in\mathcal T_X}W_X(s)
 \right\},
\end{equation}
whenever the infimum is attained.  A triple $(a,b,[t]_{\mu_X})$ gives the
random variable $a\1_{\{X\leq t\}}+b\1_{\{X>t\}}$ on any probability space
carrying $X$.  The equivalence class removes the irrelevant choice of a point
inside a gap of the support.  In particular,
\eqref{eq:set-valued-representation} depends only on the law of $X$.

We record first two immediate properties.  A direct change of variables gives
\[
 W_{X+c}(t+c)=W_X(t),\qquad
 W_{\lambda X}(\lambda t)=\lambda^2W_X(t)
 \quad(c\in\R,\ \lambda>0),
\]
and an optimal triple $(a,b,[t])$ is carried to
$(a+c,b+c,[t+c])$ or $(\lambda a,\lambda b,[\lambda t])$, respectively.

We now specialize to a finite law.  In this case an optimal partition always
exists and the entire profile can be obtained from finitely many adjacent
moves.  Let
\begin{equation}\label{eq:finite-law}
  x_1<\cdots<x_n,\qquad
  p_i=\Pp(X=x_i)>0,\qquad \sum_{i=1}^n p_i=1,
\end{equation}
with $n\geq2$.  Put
\[
  P_i:=\sum_{j\leq i}p_j,\qquad
  T_i:=\sum_{j\geq i}p_j=1-P_{i-1},
  \qquad P_0:=0,
\]
and write $\mu=\E X$.  For $1\leq i<n$, the optimized squared risk at
the cut after $x_i$ is
\begin{equation}\label{eq:scalar-risk}
  W_i:=\min_{a,b\in\R}
  \left\{\sum_{j\leq i}p_j(x_j-a)^2
        +\sum_{j>i}p_j(x_j-b)^2\right\}.
\end{equation}
The minimizers are the conditional means
\[
  a_i:=\E[X\mid X\leq x_i],\qquad
  b_i:=\E[X\mid X>x_i].
\]
For later use, include the one-regime endpoints
\[
  W_0=W_n:=\Var(X),\qquad B_i:=\Var(X)-W_i,
  \qquad B_0=B_n:=0.
\]
The usual within/between decomposition gives
\begin{equation}\label{eq:gain-formula}
 B_i=P_i(1-P_i)(a_i-b_i)^2
 =\frac{\bigl(\E[(X-\mu)\1_{\{X\leq x_i\}}]\bigr)^2}
        {P_i(1-P_i)},\qquad 1\leq i<n.
\end{equation}

The next quantities contain the correction created by the atom.  Notice that
both the strict and the non-strict conditional laws enter their definition.
For $1\leq i<n$ and $1<i\leq n$, respectively, define
\begin{align}
 A_i&:=\sum_{j>i}p_j(x_j-x_i),
 &R_i&:=\frac{A_i}{\sqrt{T_iT_{i+1}}},\label{eq:right-radius}\\
 C_i&:=\sum_{j<i}p_j(x_i-x_j),
 &L_i&:=\frac{C_i}{\sqrt{P_{i-1}P_i}},\label{eq:left-radius}
\end{align}
with $L_1:=0$ and $R_n:=0$.  Equivalently, \eqref{eq:right-product} holds for
$i<n$, while \eqref{eq:left-product} holds for $i>1$:
\begin{align}
 R_i^2
 &=\E[X-x_i\mid X>x_i]\,
   \E[X-x_i\mid X\geq x_i],\label{eq:right-product}\\
 L_i^2
 &=\E[x_i-X\mid X<x_i]\,
   \E[x_i-X\mid X\leq x_i].\label{eq:left-product}
\end{align}
Thus $R_i$ and $L_i$ are geometric residual radii.  They are not the usual
arithmetic mean residual lives.

\begin{proposition}[Exact atom update]\label{prop:atom-update}
For every $1\leq i\leq n$,
\begin{equation}\label{eq:atom-update}
 W_i-W_{i-1}=p_i(L_i^2-R_i^2),\qquad
 B_i-B_{i-1}=p_i(R_i^2-L_i^2).
\end{equation}
Consequently, moving $x_i$ from the right regime to the left weakly lowers
the optimized risk exactly when $R_i\geq L_i$.  At $i=1$ and $i=n$ the
identity compares a two-regime cut with the one-regime endpoint; only the
interior indices compare two nonempty adjacent splits.
\end{proposition}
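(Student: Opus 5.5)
Since $B_i=\Var(X)-W_i$, the two identities in \eqref{eq:atom-update} are equivalent, and I will prove only the one for $B_i$. The plan is to use the gain formula \eqref{eq:gain-formula} in the form $B_i=S_i^2/(P_i(1-P_i))$, where
\[
 S_i:=\E[(X-\mu)\1_{\{X\leq x_i\}}]=\sum_{j\leq i}p_j(x_j-\mu),
 \qquad S_0=S_n=0 .
\]
With this convention the formula also covers the endpoints: $B_0=B_n=0$, read as $0$ when a denominator vanishes.

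The first step is to rewrite $S_i$ in terms of the quantities defining $R_i$ and $L_i$. Writing $x_j-\mu=(x_j-x_i)+(x_i-\mu)$ and using $\sum_j p_j(x_j-\mu)=0$, I obtain
\[
 S_i=-C_i+P_i(x_i-\mu),\qquad S_{i-1}=S_i-p_i(x_i-\mu)=-C_i+P_{i-1}(x_i-\mu).
\]
Similarly, $-S_i=A_i+T_{i+1}(x_i-\mu)$ and $-S_{i-1}=A_i+T_i(x_i-\mu)$. Set $d:=x_i-\mu$. Solving these two representations for $d$ gives $d=C_i-A_i$, since $P_i+T_{i+1}=1$. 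This is just the identity $\E[X-x_i]=A_i-C_i$. It follows that
\[
 S_i=-T_{i+1}C_i-P_iA_i,\qquad S_{i-1}=-T_iC_i-P_{i-1}A_i .
\]

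The second step is the difference itself. Using $1-P_i=T_{i+1}$ and $1-P_{i-1}=T_i$,
\[
 B_i-B_{i-1}=\frac{(T_{i+1}C_i+P_iA_i)^2}{P_iT_{i+1}}-\frac{(T_iC_i+P_{i-1}A_i)^2}{P_{i-1}T_i}.
\]
Expanding each quotient, the cross terms are $2A_iC_i$ in both and cancel. What remains is
\[
 C_i^2\Bigl(\frac{T_{i+1}}{P_i}-\frac{T_i}{P_{i-1}}\Bigr)
 +A_i^2\Bigl(\frac{P_i}{T_{i+1}}-\frac{P_{i-1}}{T_i}\Bigr).
\]
Since $P_{i-1}+T_i=1=P_i+T_{i+1}$, the first bracket equals $(P_{i-1}-P_i)/(P_{i-1}P_i)=-p_i/(P_{i-1}P_i)$. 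In the same way the second bracket equals $p_i/(T_iT_{i+1})$. Together these give $p_i(R_i^2-L_i^2)$, as required.

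The main obstacle is the endpoint cases, where a denominator vanishes.
\begin{itemize}
\item For $i=1$: here $P_0=0$, $C_1=0$ and $L_1=0$. I will compute directly. We have $S_0=0$, so $B_0=0$. Also $B_1=P_1A_1^2/T_2$ by the formula above, and this equals $p_1R_1^2$ because $T_1=1$.
\item For $i=n$: this case is symmetric, with $A_n=0$, $R_n=0$ and $T_{n+1}=0$.
\end{itemize}
I must check that the dropped terms are genuinely zero rather than $0/0$. This holds because $C_1=0$ and $A_n=0$ exactly.

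The monotonicity statement follows immediately: $p_i>0$, so $W_i\leq W_{i-1}$ if and only if $R_i^2\geq L_i^2$, which is equivalent to $R_i\geq L_i$ since both are nonnegative.
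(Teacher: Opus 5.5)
Your proof is correct, but it takes a different route from the paper in the interior case.

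The paper treats the two blocks separately. It uses the one-point update for a weighted sum of squares: adding an atom of mass $p$ at $x$ to a block whose mean moves from $a$ to $a'$ raises its optimal cost by $p(x-a)(x-a')$. So the left cost rises by $p_i(x_i-a_{i-1})(x_i-a_i)=p_iL_i^2$ and the right cost falls by $p_i(b_{i-1}-x_i)(b_i-x_i)=p_iR_i^2$. Only the endpoints are read off from \eqref{eq:gain-formula}.

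You work only with the closed-form gain $B_i=S_i^2/(P_i(1-P_i))$. You write $S_i$ and $S_{i-1}$ as linear combinations of $A_i$ and $C_i$ via $x_i-\mu=C_i-A_i$, and observe that the cross terms $2A_iC_i$ cancel, leaving exactly $p_i(R_i^2-L_i^2)$.

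What your version buys:
\begin{itemize}
\item It is self-contained and can be checked line by line.
\item It does not rely on the weighted-variance update identity that the paper quotes as standard.
\item It avoids the small computation $x_i-a_{i-1}=C_i/P_{i-1}$, $x_i-a_i=C_i/P_i$ that the paper leaves implicit.
\item The endpoints come out of the same formulas once $C_1=A_n=0$.
\end{itemize}

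What the paper's version buys: it shows that $p_iL_i^2$ and $p_iR_i^2$ are, individually, the changes of the left and right optimal costs. This underlies the product formulas \eqref{eq:right-product}--\eqref{eq:left-product} and the reading of $L_i,R_i$ as residual radii. It is also the form reused for fibers in Proposition~\ref{prop:vector-marginal-identity}, although your algebra would carry over there too, with $M$-inner products in place of products.
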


\begin{proof}
We prove first the interior identities.  For $2\leq i\leq n-1$, adding
$p_i\delta_{x_i}$ to the left block changes its optimal sum of squares by
\[
 p_i(x_i-a_{i-1})(x_i-a_i)=p_iL_i^2.
\]
At the same time, removing this atom from the right block decreases the cost by
\[
 p_i(b_{i-1}-x_i)(b_i-x_i)=p_iR_i^2.
\]
These are the standard weighted variance identities.  Taking their difference
gives \eqref{eq:atom-update} for the interior indices.  It remains to consider
the two endpoints.  In this case \eqref{eq:gain-formula} gives directly
\[
  B_1=p_1R_1^2,\qquad B_{n-1}=p_nL_n^2.
\]
Since $B_0=B_n=0$ and $L_1=R_n=0$, the proof is complete.
\end{proof}

Notice the factor $p_i$ in \eqref{eq:atom-update}.  It explains why the
sequence $(W_i)$ need not be convex in the index $i$.  The correct variable is
the cumulative mass.

\begin{corollary}[Convexity in probability coordinates]
\label{cor:mass-polygon}
Let $\mathcal W$ be the polygonal interpolation through
\[
  (P_i,W_i),\qquad 0\leq i\leq n.
\]
Its slope on $[P_{i-1},P_i]$ is
\begin{equation}\label{eq:polygon-slope}
  \frac{W_i-W_{i-1}}{p_i}=L_i^2-R_i^2.
\end{equation}
Hence $\mathcal W$ is convex if and only if
\begin{equation}\label{eq:signal-monotonicity}
  D_i:=R_i^2-L_i^2
  \quad\hbox{is nonincreasing in }i.
\end{equation}
Under \eqref{eq:signal-monotonicity}, the minimizing cut indices form one
consecutive block.  If $(D_i)$ is strictly decreasing, there is one minimizing
vertex, except that $D_i=0$ produces exactly the two endpoints of one flat
edge.
\end{corollary}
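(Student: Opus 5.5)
The slope formula \eqref{eq:polygon-slope} follows at once from Proposition~\ref{prop:atom-update}. The nodes satisfy $P_i-P_{i-1}=p_i>0$, so the abscissae are strictly increasing from $P_0=0$ to $P_n=1$, and the $i$-th edge has horizontal length $p_i$. Dividing \eqref{eq:atom-update} by $p_i$ gives the slope $L_i^2-R_i^2=-D_i$. This holds for every $1\leq i\leq n$, including the two boundary edges, because the proposition already covers them through the conventions $W_0=W_n=\Var(X)$, $L_1=0$ and $R_n=0$.

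For the equivalence, I will use the elementary fact that a piecewise linear function on a partition $0=P_0<P_1<\cdots<P_n=1$ is convex if and only if its successive slopes are nondecreasing. Necessity: at an interior node $P_i$, convexity forces the left slope to be at most the right slope. Sufficiency: the function is then the maximum of the affine extensions of its edges, hence convex. The slopes are $-D_i$, so they are nondecreasing exactly when $(D_i)$ is nonincreasing, which is \eqref{eq:signal-monotonicity}.

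For the location of the minimizers, I will assume \eqref{eq:signal-monotonicity}. Then $(W_i)_{0\leq i\leq n}$ decreases while $D_i>0$, is constant while $D_i=0$, and increases while $D_i<0$.
\begin{itemize}
\item Let $k$ be the last index with $D_k>0$, or $k=0$ if there is none.
\item Let $m$ be the last index with $D_m\geq0$, or $m=0$ if there is none.
\end{itemize}
The minimizing vertices are then exactly $i\in\{k,\dots,m\}$, which is a consecutive block. Among the minimizing vertices we must keep only the admissible cuts $1\leq i\leq n-1$. The restriction of a consecutive block is still consecutive, and it is nonempty by Proposition~\ref{prop:atom-update}: $B_1=p_1R_1^2>0$, since $R_1>0$ for a nonconstant law, and similarly $B_{n-1}>0$. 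Hence $D_1>0$ and $D_n<0$, which gives $k\geq1$ and $m\leq n-1$. So the endpoints are never minimizers and the block lies inside the admissible range.

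In the strictly decreasing case, at most one $D_i$ can vanish. If none does, then $m=k$ and there is a single minimizing vertex. If $D_i=0$ for one $i$, then $k=i-1$ and $m=i$, so the two minimizers are the endpoints of the flat edge $[P_{i-1},P_i]$. The only delicate point in the whole argument is this bookkeeping: checking that the endpoint conventions are consistent, so that the minimizing block avoids the one-regime vertices $0$ and $n$.
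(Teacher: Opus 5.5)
Your proposal is correct and follows the paper's implicit argument: you divide the exact atom update of Proposition~\ref{prop:atom-update} by $p_i$, use that a polygon on strictly increasing nodes is convex if and only if its slopes are nondecreasing, and read the minimizing block from the sign pattern of the nonincreasing sequence $(D_i)$. Your additional check that $D_1=R_1^2>0$ and $D_n=-L_n^2<0$ is right, and it shows the block never contains the one-regime vertices $0$ and $n$, a point the paper leaves unstated.
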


The corollary gives a convenient reformulation of the optimization problem:
the whole profile is convex precisely when its consecutive slopes increase in
cumulative-mass coordinates.  The separate monotonicity conditions
\begin{equation}\label{eq:separate-radii}
  R_1\geq\cdots\geq R_n,
  \qquad L_1\leq\cdots\leq L_n
\end{equation}
are an immediate sufficient condition for \eqref{eq:signal-monotonicity}, but
they are not necessary.

\section{Lattice log-concavity and arbitrary supports}
\label{sec:lattice}

We now give a simple condition which implies the monotonicity of the two
residual radii.  The equal spacing of the support is important here.  Recall
that a positive sequence
$(s_i)$ is log-concave if $s_i^2\geq s_{i-1}s_{i+1}$ whenever the three terms
occur.

\begin{theorem}[Cumulative log-concavity on a lattice]
\label{thm:cumulative-lc}
Suppose that
\[
  x_i=x_1+(i-1)h,\qquad h>0,
\]
and that both $(P_i)$ and $(T_i)$ are log-concave.  Then $(R_i)$ is strictly
decreasing and $(L_i)$ is strictly increasing.  In particular, the
probability-coordinate risk profile has strictly increasing consecutive
slopes.  The optimized cut is
unique, apart from a possible tie between two adjacent cuts.
\end{theorem}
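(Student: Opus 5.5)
The plan is to use the lattice structure to turn the residual sums $A_i$ and $C_i$ into iterated tail sums of $(T_k)$ and $(P_k)$. The monotonicity of $R_i$ and $L_i$ then reduces to a ratio inequality that log-concavity controls. Once \eqref{eq:separate-radii} holds with strict inequalities, $D_i=R_i^2-L_i^2$ is strictly decreasing, and Corollary~\ref{cor:mass-polygon} gives the remaining claims: strictly increasing slopes, and a unique minimizing vertex except for a flat edge when some $D_i=0$.

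\emph{Step 1 (summation by parts).} With $x_j-x_i=(j-i)h$, write $j-i=\sum_{k=i+1}^{j}1$ and exchange the sums:
\[
 A_i=h\sum_{j>i}p_j(j-i)=h\sum_{k\geq i+1}T_k=:h\,S_{i+1},
 \qquad S_m:=\sum_{k=m}^{n}T_k .
\]
Hence
\[
 R_i^2=\frac{h^2S_{i+1}^2}{T_iT_{i+1}},\qquad 1\leq i<n .
\]
Symmetrically, $C_i=h\sum_{k\leq i-1}P_k$, and $L_i^2$ has the mirrored form with $P$ in place of $T$. So it suffices to treat $R$; the statement for $L$ follows by the reflection $x\mapsto -x$, which exchanges the roles of $(P_i)$ and $(T_i)$.

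\emph{Step 2 (the endpoint).} $R_{n-1}=h\sqrt{T_n/T_{n-1}}>0=R_n$, so the last comparison is immediate.

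\emph{Step 3 (the key inequality).} For $1\leq i\leq n-2$, the inequality $R_i>R_{i+1}$ is equivalent to
\[
 S_{i+1}^2\,T_{i+2}>S_{i+2}^2\,T_i .
\]
Log-concavity of $(T_k)$ gives $T_i\leq T_{i+1}^2/T_{i+2}$. Therefore it suffices to prove $S_{i+1}T_{i+2}>S_{i+2}T_{i+1}$, that is,
\[
 \sum_{m=0}^{n-i-1}\frac{T_{i+1+m}}{T_{i+1}}
 >\sum_{m=0}^{n-i-2}\frac{T_{i+2+m}}{T_{i+2}} .
\]
Log-concavity of a positive sequence means the consecutive ratios $T_{k+1}/T_k$ are nonincreasing in $k$. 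Telescoping these ratios gives $T_{k+m}/T_k\geq T_{k+1+m}/T_{k+1}$ for every $m\geq0$, so the left sum dominates the right one term by term. The left sum also has one extra positive term, namely $T_n/T_{i+1}$, which makes the inequality strict.

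The main obstacle is Step 3: combining the two sources of inequality correctly. The log-concavity bound on $T_i$ alone is not strict, so strictness has to come from the extra term in the tail sum, and the term-by-term ratio comparison has to be done with the indices aligned as above. After that, the $L$ case is identical by reflection, and the conclusion follows from Corollary~\ref{cor:mass-polygon}.
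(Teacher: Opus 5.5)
Your proposal is correct and matches the paper's proof: the tail-sum identity $A_i=h\sum_{m>i}T_m$, the nonincreasing ratios $T_{k+1}/T_k$, a termwise comparison of $A_i/T_{i+1}$ with $A_{i+1}/T_{i+2}$ made strict by the extra last term, and reflection for $L_i$. The only differences are cosmetic: you cross-multiply in $R_i^2$ and use $T_i\le T_{i+1}^2/T_{i+2}$, where the paper factors $R_i=h\sqrt{\tau_i}\,(1+\tau_{i+1}+\tau_{i+1}\tau_{i+2}+\cdots)$, and you check the endpoint $R_{n-1}>R_n=0$ explicitly.
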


\begin{proof}
Put $\tau_i=T_{i+1}/T_i$.  The log-concavity of $T$ says exactly that
$(\tau_i)$ is nonincreasing.  On the other hand, the tail-sum identity gives
\[
 A_i=h\sum_{m=i+1}^nT_m
 \,\quad\hbox{and hence}\quad
 R_i=h\sqrt{\tau_i}
 \bigl(1+\tau_{i+1}+\tau_{i+1}\tau_{i+2}+\cdots\bigr).
\]
Both factors in the last expression are nonincreasing.  More precisely,
\[
 \frac{A_i}{T_{i+1}}
 =h\bigl(1+\tau_{i+1}+\tau_{i+1}\tau_{i+2}+\cdots\bigr)
 \geq \frac{A_{i+1}}{T_{i+2}}
\]
by termwise comparison.  The last nonzero term makes the inequality between
the corresponding radii strict.  Applying the same argument to the reflected
distribution and using the log-concavity of $P$ proves that $L_i$ is strictly
increasing.
\end{proof}

\begin{corollary}[Log-concave masses]\label{cor:pmf-lc}
If the mass sequence satisfies
\begin{equation}\label{eq:pmf-lc}
  p_i^2\geq p_{i-1}p_{i+1},\qquad 1<i<n,
\end{equation}
then the conclusion of Theorem~\ref{thm:cumulative-lc} holds.
\end{corollary}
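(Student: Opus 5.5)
The plan is to reduce Corollary~\ref{cor:pmf-lc} to Theorem~\ref{thm:cumulative-lc}. It suffices to show that log-concavity of the mass sequence $(p_i)$ implies log-concavity of both the cumulative sequence $(P_i)$ and the survival sequence $(T_i)$. Once this is established, the lattice hypothesis $x_i=x_1+(i-1)h$ is unchanged, and the theorem gives the strict monotonicity of $(R_i)$ and $(L_i)$. That in turn yields the strictly increasing slopes and the uniqueness of the optimal cut up to an adjacent tie. The whole content is therefore the classical fact that partial sums of a log-concave sequence without internal zeros are log-concave.

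For $(T_i)$, one first notes that all $p_i>0$, so \eqref{eq:pmf-lc} is equivalent to saying that $p_{i+1}/p_i$ is nonincreasing in $i$. Set $\rho_i:=p_{i+1}/p_i$. The ratio $T_{i+1}/T_i$ should be nonincreasing, which is the form of log-concavity used in the proof of Theorem~\ref{thm:cumulative-lc}.

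The cleanest route is the cross-product inequality
\[
T_i^2-T_{i-1}T_{i+1}=T_i(T_i-T_{i+1})-T_{i+1}(T_{i-1}-T_i)=T_ip_i-T_{i+1}p_{i-1}.
\]
Expanding, this equals
\[
\sum_{j\ge i}p_jp_i-\sum_{j\ge i+1}p_jp_{i-1}=\sum_{k\ge0}\bigl(p_{i+k}p_i-p_{i+k+1}p_{i-1}\bigr).
\]
Each term is nonnegative, since $p_{i+k+1}/p_{i+k}\le p_i/p_{i-1}$ by monotonicity of $\rho$. In addition, the final term $k=n-i$ contributes $p_np_i>0$, which gives strictness, although only $\geq$ is needed.

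For $(P_i)$, the same argument applies to the reversed sequence $\tilde p_i=p_{n+1-i}$. This sequence is again log-concave, and its survival sums are exactly the $P$'s.

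The only point needing care is the range of indices. One must check that log-concavity of $(T_i)$ and $(P_i)$ is required exactly on the index range used in the proof of Theorem~\ref{thm:cumulative-lc}, including the boundary indices where $T_{n+1}=0$ or $P_0=0$. At those boundary indices the inequality $s_i^2\ge s_{i-1}s_{i+1}$ holds trivially. Beyond this bookkeeping I expect no obstacle; the termwise comparison above is the main step.
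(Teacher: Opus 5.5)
Your proposal is correct and follows the paper's route. You reduce to Theorem~\ref{thm:cumulative-lc} by showing that $(T_i)$ and $(P_i)$ inherit log-concavity, prove the tail case termwise from the decreasing ratios $p_{i+1}/p_i$, and obtain the cumulative case by reflection. Your identity $T_i^2-T_{i-1}T_{i+1}=T_ip_i-T_{i+1}p_{i-1}\geq 0$ is equivalent to $T_{i-1}/p_{i-1}\geq T_i/p_i$, which is exactly the paper's shifted-product comparison; the paper additionally cites preservation of log-concavity under convolution as an alternative.
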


\begin{proof}
The partial sums and the tails are convolutions of $(p_i)$ with a one-sided
all-ones sequence.  Their log-concavity therefore follows from the
preservation of log-concavity under convolution
\cite{johnson-goldschmidt}.  For completeness, the tail assertion can also be
seen directly.  The ratios $p_{i+1}/p_i$ decrease, and the shifted-product
expansion of $T_i/p_i$ shows that $T_{i+1}/T_i$ decreases.  The cumulative
assertion follows by reflection.
\end{proof}

The hypothesis on the cumulative sequences is weaker than
\eqref{eq:pmf-lc}.  For instance, masses proportional to $(1,1,2,1)$ have
log-concave cumulative and survival sequences.  On the other hand, equal
spacing cannot be dropped.  Log-concavity of the masses does not control an
arbitrary support.  To see this, take the uniform law on
\[
  \{-20,-19,-2,2,19,20\}
\]
has a log-concave mass array and is reflection symmetric, but its gains are
\[
  80,\quad \frac{1521}{8},\quad \frac{1681}{9},\quad
  \frac{1521}{8},\quad 80.
\]
Thus the two off-center cuts are optimal, while the central cut is not.

\section{Weak symmetry locates the crossing}
\label{sec:weak-symmetry}

The monotonicity from the preceding sections determines the shape of the
profile.  We use now a centering condition to locate its minimum.  Let
$\mu=\E X$ and assume the weak balance
\begin{equation}\label{eq:weak-balance}
  \Pp(X<\mu)=\Pp(X>\mu).
\end{equation}
Notice that no reflection property of the distribution is assumed.

\begin{corollary}[Mean-atom balance]\label{cor:mean-atom}
Suppose that $\mu=x_k$ and
\[
  \Pp(X<\mu)=\Pp(X>\mu)=q>0.
\]
Then the cuts immediately below and immediately above $\mu$ have the same
gain.  If \eqref{eq:signal-monotonicity} holds, both are globally optimal;
if the signal is strictly decreasing, these are the only optimal cuts.
\end{corollary}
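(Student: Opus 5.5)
The plan is to compare the two cuts directly through the gain formula \eqref{eq:gain-formula}, and then use Corollary~\ref{cor:mass-polygon} together with the exact atom update of Proposition~\ref{prop:atom-update} for optimality. The cut immediately below $\mu$ is the cut after $x_{k-1}$, and the cut immediately above is the cut after $x_k$. Since $q>0$, both cuts have two nonempty regimes, so $2\le k\le n-1$. Write $S:=\E[(X-\mu)\1_{\{X<\mu\}}]$, which is strictly negative because $q>0$. The atom at $\mu$ contributes nothing to $\E[(X-\mu)\1_{\{X\le x_k\}}]$, so both numerators in \eqref{eq:gain-formula} equal $S^2$. For the denominators, $P_{k-1}=q$ and $1-P_{k-1}=1-q$, while $P_k=1-q$ and $1-P_k=q$. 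Hence
\[
B_{k-1}=\frac{S^2}{q(1-q)}=B_k ,
\]
which gives equality of the gains.

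For global optimality, the equality $B_{k-1}=B_k$ together with Proposition~\ref{prop:atom-update} gives $p_k D_k=B_k-B_{k-1}=0$, so $D_k=0$. By \eqref{eq:polygon-slope}, the polygon $\mathcal W$ has zero slope on $[P_{k-1},P_k]$. Under \eqref{eq:signal-monotonicity}, $\mathcal W$ is convex, and a convex function with a flat edge of slope zero attains its minimum on that edge. Concretely, the slopes $-D_i$ are $\le 0$ for $i<k$ and $\ge0$ for $i>k$. So $W_i$ is nonincreasing up to index $k-1$ and nondecreasing from index $k$ on. Therefore $W_{k-1}=W_k=\min_{0\le i\le n}W_i$. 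This minimum is taken over a set that includes the endpoints $W_0=W_n=\Var X$, so in particular both cuts minimize over the admissible cuts $1\le i\le n-1$.

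For uniqueness under strict decrease, $D_i>0$ for $i<k$ and $D_i<0$ for $i>k$. So every other edge has strictly nonzero slope, and $W_i>W_{k-1}$ for $i<k-1$ and $W_i>W_k$ for $i>k$. This is exactly the flat-edge exception in Corollary~\ref{cor:mass-polygon}. Since cuts inside a gap are identified through $[t]_{\mu_X}$, the only optimal classes are those of $x_{k-1}$ and $x_k$.

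I expect no serious obstacle. The only care needed is in the boundary bookkeeping: $q>0$ ensures $k\ne1,n$, and the equality $B_{k-1}=B_k$ must be translated into $D_k=0$ through the exact update rather than assumed.
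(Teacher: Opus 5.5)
Your proof is correct and follows essentially the same route as the paper. Both arguments use \eqref{eq:gain-formula}: equal numerators (your observation that the atom at $\mu$ contributes nothing, equivalently the paper's centering $\pm M$) and swapped denominators $q(1-q)$ give $B_{k-1}=B_k$, hence a flat edge on $[P_{k-1},P_k]$; convexity under \eqref{eq:signal-monotonicity} then places the minimum on that edge, and strict signs of the remaining slopes give uniqueness.
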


\begin{proof}
Set $M=\E[(X-\mu)\1_{\{X>\mu\}}]$.  Since $\E(X-\mu)=0$, the corresponding
left moment is $-M$.  Formula \eqref{eq:gain-formula} shows that both cuts have
gain $M^2/[q(1-q)]$.  Hence $B_{k-1}=B_k$, or equivalently
$W_{k-1}=W_k$.  The polygonal profile therefore has slope zero on the interval
$[P_{k-1},P_k]$.  Under \eqref{eq:signal-monotonicity} the profile is convex,
so its slopes are nondecreasing.  All slopes before this interval are thus
nonpositive and all slopes after it are nonnegative.  Consequently the profile
decreases up to this flat edge and increases after it, which proves that both
cuts are globally optimal.  If $D_i$ is strictly decreasing, the slopes are
strictly negative before this edge and strictly positive after it; hence the
two cuts are the only minimizers.
\end{proof}

The condition is genuinely weaker than symmetry of the law.  For example,
the masses proportional to $(5,6,7,7,3,1)$ on
$\{-2,-1,0,1,2,3\}$ are strictly log-concave, have mean $0$ and balanced
tails, but are not reflection symmetric; the two cuts bracketing $0$ are the
only optimizers.

We turn now to the case where the mean is not an atom.  Here discreteness
produces an additional condition.  Suppose
\begin{equation}\label{eq:offgrid-setup}
 x_k<\mu<x_{k+1},\qquad
 \Pp(X<\mu)=\Pp(X>\mu)=\frac12.
\end{equation}
Write
\begin{align*}
 d&:=\E[X-\mu\mid X>\mu],\\
  &=\E[\mu-X\mid X<\mu],\\
 z_-&:=\mu-x_k,\quad z_+:=x_{k+1}-\mu,\\
 p_-&:=p_k,\qquad p_+:=p_{k+1}.
\end{align*}
The gain of the mean-gap cut is $B_k=d^2$.

\begin{theorem}[Sharp off-grid atom correction]
\label{thm:offgrid-correction}
Assume \eqref{eq:offgrid-setup} and that the signal $D_i$ is nonincreasing.
Then the mean-gap cut is globally optimal if and only if
\begin{equation}\label{eq:boundary-atom-condition}
 p_-\leq\frac{dz_-}{d^2+z_-^2},\qquad
 p_+\leq\frac{dz_+}{d^2+z_+^2},
\end{equation}
with an inequality omitted if the neighbouring two-regime cut is
inadmissible.  Strict inequalities together with a strictly decreasing
signal give a unique optimum.  Equality on one side makes the adjacent cut
optimal as well; under a merely nonincreasing signal, a larger consecutive
tie block is possible.
\end{theorem}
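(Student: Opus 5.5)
By Corollary~\ref{cor:mass-polygon}, a nonincreasing signal makes the polygon $\mathcal W$ convex in the cumulative-mass coordinate. On a convex polygon a vertex is a global minimizer if and only if it is a local one. So the mean-gap cut (index $k$) is globally optimal exactly when $W_k\leq W_{k-1}$ and $W_k\leq W_{k+1}$, that is, $B_{k-1}\leq B_k$ and $B_{k+1}\leq B_k$. I would therefore not work with the radii $R_i,L_i$ directly. Instead I would compare the three gains $B_{k-1},B_k,B_{k+1}$ explicitly through the second form of \eqref{eq:gain-formula}.

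\emph{Computing the three gains.} Under \eqref{eq:offgrid-setup} we have $P_k=\tfrac12$ and
\[
\E[(X-\mu)\1_{\{X\leq x_k\}}]=-\tfrac12 d .
\]
Hence $B_k=(d/2)^2/(1/4)=d^2$, as already noted. Removing the atom $x_k$ from the left block gives
\[
P_{k-1}=\tfrac12-p_-,\qquad
\E[(X-\mu)\1_{\{X\leq x_{k-1}\}}]=-\tfrac12 d+p_-z_-,
\]
so that
\[
B_{k-1}=\frac{(d/2-p_-z_-)^2}{(\tfrac12-p_-)(\tfrac12+p_-)}.
\]
Then $B_{k-1}\leq d^2$ is equivalent to
\[
\tfrac14d^2-dp_-z_-+p_-^2z_-^2\leq d^2\bigl(\tfrac14-p_-^2\bigr),
\]
that is, $p_-^2(d^2+z_-^2)\leq dp_-z_-$. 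Since $p_->0$, this is the left inequality in \eqref{eq:boundary-atom-condition}. The denominator is positive precisely when the cut $k-1$ is admissible ($P_{k-1}>0$). If $k=1$ that cut is a one-regime endpoint, with $B_0=0\leq d^2$ automatically, so the inequality is omitted. The right-hand side is handled symmetrically, or by applying the same computation to $-X$ (using the scaling remark with reflection, or simply repeating the algebra). Moving $x_{k+1}$ to the left block gives moment $-\tfrac12 d+p_+z_+$ with mass $\tfrac12+p_+$, and the same algebra yields $p_+\leq dz_+/(d^2+z_+^2)$.

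\emph{Uniqueness and ties.} Strict inequalities in \eqref{eq:boundary-atom-condition} give $W_k<W_{k\pm1}$, i.e.\ $D_k>0>D_{k+1}$ via \eqref{eq:polygon-slope}. Under a strictly decreasing signal all other slopes are then strictly signed away from $k$, so $k$ is the unique minimizer. Equality on one side makes an adjacent edge flat, so both of its endpoints are optimal. If $D$ is merely nonincreasing, further neighbouring slopes may also vanish, and this yields a larger tie block, as in Corollary~\ref{cor:mass-polygon}.

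I expect no real obstacle. The only care needed is bookkeeping of the moment and mass after an atom crosses the cut, and the admissibility and endpoint conventions ($B_0=B_n=0$).
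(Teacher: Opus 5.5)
Your proposal is correct and follows essentially the same route as the paper. Both compute $B_{k-1},B_k,B_{k+1}$ from the centered-moment form of \eqref{eq:gain-formula} and reduce $B_{k\pm1}\le B_k$ to \eqref{eq:boundary-atom-condition}; both then use the sign pattern of the nonincreasing signal (equivalently, convexity of the mass polygon) to pass from local to global optimality. As in the paper's argument, your uniqueness step actually needs only $D_k>0>D_{k+1}$ together with monotonicity of $D_i$, not strict decrease.
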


\begin{proof}
We write the gain in terms of the centered left moment
\[
 S_i:=\E[(X-\mu)\1_{\{X\leq x_i\}}],
 \qquad
 B_i=\frac{S_i^2}{P_i(1-P_i)}.
\]
For the mean-gap cut, $P_k=1/2$ and
\[
 S_k=\E[(X-\mu)\1_{\{X<\mu\}}]=-\frac{d}{2},
\]
so indeed $B_k=d^2$.

Suppose first that the cut after $x_{k+1}$ is admissible.  Moving the atom at
$x_{k+1}=\mu+z_+$ to the left changes the left mass and its centered moment to
\[
 P_{k+1}=\frac12+p_+,\qquad
 S_{k+1}=-\frac d2+p_+z_+.
\]
Consequently,
\[
 B_{k+1}
 =\frac{(d/2-p_+z_+)^2}{(1/2+p_+)(1/2-p_+)}
 =\frac{(d/2-p_+z_+)^2}{1/4-p_+^2}.
\]
Subtracting $B_k=d^2$ and simplifying gives
\begin{align}
 B_{k+1}-B_k
 &=\frac{p_+\{p_+(d^2+z_+^2)-dz_+\}}
         {1/4-p_+^2}.\label{eq:right-atom-correction}
\end{align}

The calculation on the left is the same.  If the cut after $x_{k-1}$ is
admissible, removing the atom at $x_k=\mu-z_-$ from the left gives
\[
 P_{k-1}=\frac12-p_-,\qquad
 S_{k-1}=-\frac d2+p_-z_-,
\]
and therefore
\begin{align}
 B_{k-1}-B_k
 &=\frac{p_-\{p_-(d^2+z_-^2)-dz_-\}}
         {1/4-p_-^2}.\label{eq:left-atom-correction}
\end{align}
The denominators are positive because $p_\pm<1/2$.  Hence
$B_{k+1}\leq B_k$ is equivalent to the right inequality in
\eqref{eq:boundary-atom-condition}, while $B_{k-1}\leq B_k$ is equivalent to
the left one.  Thus \eqref{eq:boundary-atom-condition} is exactly the local
maximum condition for the gain.

It remains to explain why this local condition is global.  By
\eqref{eq:atom-update},
\[
 B_i-B_{i-1}=p_iD_i.
\]
The inequality $B_{k-1}\leq B_k$ gives $D_k\geq0$, and
$B_{k+1}\leq B_k$ gives $D_{k+1}\leq0$.  Since $(D_i)$ is nonincreasing, we
have $D_i\geq0$ for every $i\leq k$ and $D_i\leq0$ for every $i\geq k+1$.
Thus $(B_i)$ is nondecreasing up to $k$ and nonincreasing after $k$, so $B_k$
is a global maximum, equivalently $W_k$ is a global minimum.  The same
argument, with only one side, applies when a neighboring two-regime cut is
inadmissible.

If both inequalities in \eqref{eq:boundary-atom-condition} are strict, then
$D_k>0>D_{k+1}$.  Under strict decrease of the signal, all gain increments
before $k$ are strictly positive and all those after $k$ are strictly
negative, which proves uniqueness.  Equality on one side gives a zero
increment and hence a tie with the corresponding adjacent cut.  If the signal
is only nonincreasing, several consecutive zero increments may produce a
larger tie block.
\end{proof}

On a lattice this correction has a particularly simple form.

\begin{corollary}[Midpoint weak symmetry]\label{cor:midpoint-symmetry}
Assume the hypotheses of Theorem~\ref{thm:cumulative-lc}, that $\mu$ is the
midpoint of the adjacent sites $x_k,x_{k+1}$, and
\eqref{eq:offgrid-setup}.  Then the mean-gap cut is the unique optimal cut.
\end{corollary}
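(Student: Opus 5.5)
The plan is to verify the two boundary-atom inequalities \eqref{eq:boundary-atom-condition} with strict inequality, and then invoke Theorem~\ref{thm:offgrid-correction}. By Theorem~\ref{thm:cumulative-lc}, $(R_i)$ is strictly decreasing and $(L_i)$ is strictly increasing. Hence the signal $D_i=R_i^2-L_i^2$ is strictly decreasing, so strict inequalities in \eqref{eq:boundary-atom-condition} give uniqueness of the mean-gap cut. Since $\mu$ is the midpoint, $z_-=z_+=h/2=:z$. By reflection, using the log-concavity of $(T_i)$ in place of that of $(P_i)$, it suffices to treat the left inequality $p_-<dz/(d^2+z^2)$. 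If $p_-=1/2$, the left neighbouring cut is inadmissible and the inequality is omitted, so I assume $0<p_-<1/2$.

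\emph{Step 1: an upper bound on $d$ via log-concavity of $(P_i)$.} The condition $P_j^2\geq P_{j-1}P_{j+1}$ means that $P_{j-1}/P_j$ is nondecreasing in $j$. With $P_k=1/2$ and $P_{k-1}=1/2-p_-$, every ratio for $j\leq k$ satisfies
\[
 \frac{P_{j-1}}{P_j}\leq\rho:=1-2p_-.
\]
Hence $P_m\leq \tfrac12\rho^{k-m}$ for $m\leq k$. The tail-sum identity, as in the proof of Theorem~\ref{thm:cumulative-lc}, gives
\[
 \tfrac12 d=\E[(\mu-X)\1_{\{X<\mu\}}]=\tfrac12 z+h\sum_{m\leq k-1}P_m .
\]
The sum is a finite geometric-type sum, so it is strictly less than $\tfrac12\,\rho/(1-\rho)$ whenever $\rho>0$ (and it vanishes if $k=1$). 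Using $h=2z$, this yields
\[
 d< z\Bigl(1+\frac{2\rho}{1-\rho}\Bigr)=z\,\frac{1-p_-}{p_-}.
\]
Trivially also $d\geq z$, since every left atom lies at distance at least $z$ from $\mu$.

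\emph{Step 2: reduce to a one-variable inequality.} Put $u=d/z\in[1,(1-p_-)/p_-)$. The required condition becomes $p_-<u/(u^2+1)$. The function $u\mapsto u/(u^2+1)$ is decreasing on $[1,\infty)$, so it suffices to check that
\[
 p_-\leq \frac{p_-(1-p_-)}{(1-p_-)^2+p_-^2},
\]
which is the value at $u=(1-p_-)/p_-$. The strict bound on $u$ then upgrades this to strict inequality. The displayed inequality reduces to $1-p_-\geq 1-2p_-+2p_-^2$, that is, $p_-\leq 1/2$, which holds.

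\emph{Step 3: conclude.} The symmetric argument, using the log-concavity of $(T_i)$, the identity $T_{k+1}=1/2$ and $\E[(X-\mu)\1_{\{X>\mu\}}]=\tfrac12 d$, gives $p_+<dz/(d^2+z^2)$. Theorem~\ref{thm:offgrid-correction}, together with the strict decrease of the signal, then yields that the mean-gap cut is the unique optimum.

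The main obstacle is Step 1: obtaining a bound on $d$ sharp enough that the crude geometric estimate still closes the inequality. The pleasant point, which I expect to carry the argument, is that the midpoint assumption $h=2z$ makes the bound exactly $z(1-p_-)/p_-$. With that bound the final comparison reduces to the trivial $p_-\leq1/2$.
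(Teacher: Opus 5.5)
Your proposal is correct and follows essentially the same route as the paper. In both, log-concavity of the cumulative (or survival) sequence bounds every ratio by the one at the mean gap, and the tail-sum formula gives $d<z(1-p)/p$, which is the paper's bound $p_+\le z/(d+z)$ rearranged. The bound $d\ge z$ then closes the inequality, strictness comes from truncating the geometric series, and uniqueness follows from Theorem~\ref{thm:offgrid-correction}. The only differences are cosmetic: you treat the left side with unconditional partial sums, and you finish via monotonicity of $u/(u^2+1)$ instead of the paper's direct algebraic comparison.
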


\begin{proof}
We prove first the right inequality in
\eqref{eq:boundary-atom-condition}.  Since $\mu$ is the midpoint of
$x_k$ and $x_{k+1}$, we have $z_+=h/2$.  Conditional on $X>\mu$, the random
variable
\[
 J:=\frac{X-x_{k+1}}{h}
\]
takes values in $\{0,1,2,\ldots\}$.  Put
\[
 q_0:=\Pp(J=0\mid X>\mu),
 \qquad S_j:=\Pp(J\geq j\mid X>\mu),\quad j\geq0.
\]
Because $\Pp(X>\mu)=1/2$, the mass at the first point of the right half is
\[
 p_+=\Pp(X=x_{k+1})=\frac{q_0}{2}.
\]

The sequence $(S_j)$ is a normalized subsequence of the survival sequence
$(T_i)$ and is therefore log-concave.  Thus the ratios
$S_{j+1}/S_j$ are nonincreasing.  Since $S_0=1$ and
$S_1=1-q_0$, we obtain
\[
 \frac{S_{j+1}}{S_j}\leq\frac{S_1}{S_0}=1-q_0,
 \qquad j\geq0.
\]
Iterating this inequality gives
\[
 S_j\leq(1-q_0)^j,
\]
and the tail-sum formula for the nonnegative integer-valued random variable
$J$ yields
\[
 \E[J\mid X>\mu]=\sum_{j\geq1}S_j
 \leq\sum_{j\geq1}(1-q_0)^j
 =\frac{1-q_0}{q_0}.
\]
Here and below the conditional expectation notation is suppressed because
$J$ has been defined under the conditional law $X>\mu$.

Moreover,
\[
 X-\mu=(X-x_{k+1})+(x_{k+1}-\mu)
       =h\left(J+\frac12\right)
 \quad\hbox{on }\{X>\mu\}.
\]
Consequently,
\[
 d=h\left(\E J+\frac12\right),
 \qquad
 d+\frac h2=h(\E J+1)\leq\frac{h}{q_0}.
\]
It follows that
\[
 p_+=\frac{q_0}{2}
 \leq\frac{h}{2(d+h/2)}.
\]
Finally $J\geq0$, and hence $d\geq h/2$.  This last inequality is equivalent
to
\[
 \frac{h}{2(d+h/2)}
 \leq\frac{d(h/2)}{d^2+h^2/4}
 =\frac{dz_+}{d^2+z_+^2}.
\]
We have proved the required bound for $p_+$.  Applying the same argument to
$-X$ gives the bound for $p_-$.

If the neighboring right cut is admissible, then there is positive mass
strictly above $x_{k+1}$.  Since the support is finite, the geometric tail
bound is then strict at some index, and so the inequality for $p_+$ is strict.
The same observation applies on the left.  Finally,
Theorem~\ref{thm:cumulative-lc} gives a strictly decreasing signal $D_i$.
The uniqueness assertion now follows from
Theorem~\ref{thm:offgrid-correction}.
\end{proof}

We point out that the midpoint assumption cannot simply be omitted.  Consider
the support $\{-1,0,1,2,3\}$ and take masses
\[
  \frac1{26}(1,12,7,4,2).
\]
They are strictly log-concave, $\mu=10/13\in(0,1)$, and each side of the mean
has probability $1/2$.  Nevertheless, the mean-gap gain is $121/169$, while
the gain after $1$ is $3721/5070>121/169$.  Here
\[
 d=\frac{11}{13},\qquad z_+=\frac3{13},\qquad p_+=\frac7{26},
\]
and the right condition in \eqref{eq:boundary-atom-condition} fails, since
$7/26>33/130$.

\section{The continuous benchmark and discrete limits}
\label{sec:continuous-benchmark}

We discuss here the atomless version of the preceding formulas.  The natural
horizontal variable is again probability mass and not the numerical value of
the threshold.  This distinction is useful when the support has gaps, because
the threshold profile is constant on a gap while the quantile profile has no
such redundant parameter.

Let $X\in L^2$ have an atomless, nondegenerate distribution function $F$, and
let $Q(u)=\inf\{x:F(x)\geq u\}$ be its quantile function.  For $0<u<1$ define
\begin{align}
 a(u)&:=\frac1u\int_0^u Q(v)\,dv,\\
 b(u)&:=\frac1{1-u}\int_u^1Q(v)\,dv,
 \label{eq:continuous-centres}\\
 \mathcal W_X(u)&:=
 \int_0^u\bigl(Q(v)-a(u)\bigr)^2\,dv
 +\int_u^1\bigl(Q(v)-b(u)\bigr)^2\,dv.
 \label{eq:continuous-quantile-profile}
\end{align}
At $u=0,1$ we set $\mathcal W_X(u)=\Var(X)$.  If $u=F(t)$ and $t$ belongs to
the support, \eqref{eq:continuous-quantile-profile} is equal to $W_X(t)$ from
\eqref{eq:profiled-risk-general}.  The next result is therefore the continuous
counterpart of Corollary~\ref{cor:mass-polygon}.

\begin{theorem}[Continuous residual signal]
\label{thm:continuous-residual-signal}
Define, for every $t$ in $J(F)=\{t:0<F(t)<1\}$,
\begin{equation}\label{eq:continuous-residual-radii}
 \ell(t):=\E[t-X\mid X<t],\qquad
 r(t):=\E[X-t\mid X>t],\qquad D(t):=r(t)^2-\ell(t)^2.
\end{equation}
The profile $\mathcal W_X$ is locally absolutely continuous on $(0,1)$ and,
for almost every $u$,
\begin{equation}\label{eq:continuous-profile-derivative}
 \mathcal W_X'(u)
 =\bigl(Q(u)-a(u)\bigr)^2-\bigl(b(u)-Q(u)\bigr)^2
 =\ell(Q(u))^2-r(Q(u))^2.
\end{equation}
Consequently, $\mathcal W_X$ is convex if and only if
$u\mapsto D(Q(u))$ has a nonincreasing representative.  If both $F$ and
$1-F$ are log-concave on $J(F)$, then $\ell$ is nondecreasing, $r$ is
nonincreasing, and this convexity condition holds.

If $D$ is nonincreasing and the mean $\mu$ is also a median, then $u=1/2$
minimizes $\mathcal W_X$.  It is the unique minimizer whenever $D$ is strictly
decreasing on the support.
\end{theorem}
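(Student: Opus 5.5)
Write $m(u):=\int_0^u Q(v)\,dv$ and $\mu=m(1)$. Since $Q\in L^2(0,1)$, expanding the squares in \eqref{eq:continuous-quantile-profile} gives
\[
 \mathcal W_X(u)=\int_0^1Q^2\,dv-\frac{m(u)^2}{u}-\frac{(\mu-m(u))^2}{1-u}.
\]
The function $m$ is absolutely continuous with $m'=Q$ almost everywhere. On compact subintervals of $(0,1)$ the two quotients are therefore locally absolutely continuous, being products and quotients of such functions with denominators bounded away from zero. Differentiating,
\[
 \frac{d}{du}\Bigl(\frac{m^2}{u}\Bigr)=2Qa-a^2,\qquad
 \frac{d}{du}\Bigl(\frac{(\mu-m)^2}{1-u}\Bigr)=-2Qb+b^2 .
\]
Hence $\mathcal W_X'=-2Qa+a^2+2Qb-b^2=(Q-a)^2-(b-Q)^2$ almost everywhere. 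This is the continuous version of the atom update in Proposition~\ref{prop:atom-update}.

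To identify the terms with residual radii, I would use atomlessness. Because $F$ has no atoms, $F(Q(u))=u$ for every $u\in(0,1)$, and $\Pp(X<Q(u))=\Pp(X\le Q(u))=u$. It follows that $a(u)=\E[X\mid X<Q(u)]$ and $b(u)=\E[X\mid X>Q(u)]$, so $Q(u)-a(u)=\ell(Q(u))$ and $b(u)-Q(u)=r(Q(u))$. Convexity then follows from the general fact that a locally absolutely continuous function is convex exactly when its derivative has a nondecreasing representative. Here the derivative is $-D(Q(u))$. Convexity on $(0,1)$ extends to $[0,1]$ provided $\mathcal W_X(u)\to\Var X$ at the endpoints, and this holds because $m(u)^2/u\le\int_0^uQ^2\to0$ by Cauchy--Schwarz, with the same bound at $u=1$.

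For the log-concave part I would copy the proof of Theorem~\ref{thm:cumulative-lc}. Write
\[
 r(t)=\frac{\int_t^\infty (1-F(s))\,ds}{1-F(t)}.
\]
If $1-F$ is log-concave, then $(1-F(t+s))/(1-F(t))$ is nonincreasing in $t$ for each $s\ge0$. Integrating over $s$ shows that $r$ is nonincreasing. The same argument with $\ell(t)=\int_{-\infty}^tF(s)\,ds/F(t)$ and the log-concavity of $F$ shows that $\ell$ is nondecreasing. Thus $D$ is nonincreasing. Since $Q$ is nondecreasing, $D\circ Q$ is nonincreasing, and the convexity criterion gives the claim.

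For the location of the minimizer, suppose $\mu$ is a median. Atomlessness gives $F(\mu)=1/2$, and I take $Q(1/2)=\mu$. This is the step I expect to need the most care: if there is a gap at the median, one has to choose the right point of the gap, or else work with one-sided derivatives. With $Q(1/2)=\mu$, the identity $\E[X-\mu]=0$ gives $\tfrac12\ell(\mu)=\tfrac12 r(\mu)$, so $D(Q(1/2))=0$. For $u<1/2$ we have $Q(u)\le\mu$, hence $D(Q(u))\ge D(\mu)=0$ and $\mathcal W_X'\le0$. The same reasoning gives $\mathcal W_X'\ge0$ for $u>1/2$, so $u=1/2$ is a minimizer.

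For uniqueness, assume $D$ is strictly decreasing on the support. For $u<1/2$ in a set of positive measure we have $Q(u)<\mu$, with $Q(u)$ in the support, so $D(Q(u))>0$ there. The inequality on the derivative is then strict on a set of positive measure in $(u,1/2)$, which gives $\mathcal W_X(u)>\mathcal W_X(1/2)$. The case $u>1/2$ is symmetric.
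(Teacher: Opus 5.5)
Your proposal is correct and follows essentially the same route as the paper: you differentiate the explicit expression $\int_0^1 Q^2-m(u)^2/u-(\mu-m(u))^2/(1-u)$ almost everywhere, you use the tail-integral formulas for $r$ and $\ell$ together with the monotone increments of $\log(1-F)$ and $\log F$, and you get $r(\mu)=\ell(\mu)$ from centering. The only differences are that you locate the minimizer from the sign of $\mathcal W_X'$ rather than from convexity, and that you also check continuity at $u=0,1$, which the paper leaves implicit. The step you flag is not actually delicate: you never use $Q(1/2)=\mu$, only $D(\mu)=0$ together with $Q(u)\le\mu$ for $u<1/2$ and $Q(u)\ge\mu$ for $u>1/2$; these hold for any atomless $F$ with $F(\mu)=1/2$, and the single point $u=1/2$ does not matter for an almost-everywhere derivative.
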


\begin{proof}
We start with the derivative.  Minimizing each integral in
\eqref{eq:continuous-quantile-profile}, we have for example
\[
 \min_c\int_0^u(Q(v)-c)^2\,dv
 =\int_0^uQ(v)^2\,dv-\frac1u\left(\int_0^uQ(v)\,dv\right)^2.
\]
These expressions are absolutely continuous on every compact subinterval of
$(0,1)$.  Differentiation at a Lebesgue point of $Q$ gives
$(Q(u)-a(u))^2$ for the left term and $-(b(u)-Q(u))^2$ for the right term.
This proves \eqref{eq:continuous-profile-derivative}.  Now a locally
absolutely continuous function is convex if and only if its derivative has a
nondecreasing representative.  This gives the equivalence with the signal
condition.

In the second place, write $S=1-F$.  The usual tail formulas give
\[
 r(t)=\int_0^\infty\frac{S(t+s)}{S(t)}\,ds,
 \qquad
 \ell(t)=\int_0^\infty\frac{F(t-s)}{F(t)}\,ds,
\]
where the integrands are set equal to zero outside the support.  If
$\phi=\log S$ is concave, then for every $s>0$ the increment
$\phi(t+s)-\phi(t)$ is nonincreasing in $t$; hence $r$ is nonincreasing.
If $\psi=\log F$ is concave, then
$\psi(t-s)-\psi(t)$ is nondecreasing in $t$; hence $\ell$ is
nondecreasing.  It follows that $D$ is nonincreasing and the profile is
convex.

Finally, suppose that $F(\mu)=1/2$ and $\E X=\mu$.  Then
\[
 \E[(X-\mu)_+]=\E[(\mu-X)_+],
\]
and consequently $r(\mu)=\ell(\mu)$.  The convexity of the profile places its
minimum at $u=1/2$.  If $D$ is strictly decreasing, no other minimizer is
possible.
\end{proof}

Every nondegenerate log-concave law on the line has a log-concave density.
Moreover, $F$ and $1-F$ are log-concave by marginalization
\cite{prekopa1973,bagnoli-bergstrom,saumard-wellner}.  The theorem therefore
recovers the convexity mechanism used in the continuous model
\cite{nistor-popescu-continuous}.  At the same time, it shows that
bi-log-concavity is already sufficient \cite{duembgen-kolesnyk-wilke}.  The
exact condition remains the monotonicity of $D$.

\begin{remark}[Fine-grid compatibility]
\label{rem:fine-grid-compatibility}
For completeness, we record the relation with fine discretizations.  Call a
measure $\nu$ log-concave when it satisfies the Pr\'ekopa inequality
$\nu((1-\lambda)A+\lambda B)\geq
\nu(A)^{1-\lambda}\nu(B)^\lambda$ for compact sets $A,B$ and
$0<\lambda<1$.  For $h>0$, $a\in\R$, and $\nu=\mathcal L(X)$, let
\[
 I_{h,a,k}:=(a+(k-1)h,a+kh],\qquad
 p_{h,a,k}:=\nu(I_{h,a,k}).
\]
If $\nu$ is log-concave, then $(p_{h,a,k})_{k\in\mathbb Z}$ is log-concave
and its positive support is an interval.  Conversely, suppose that for some
$h_n\downarrow0$ and origins $a_n$ the positive cell masses have interval
support and
\[
 p_{h_n,a_n,k}^{2}\geq
 p_{h_n,a_n,k-1}p_{h_n,a_n,k+1}
 \qquad(k\in\mathbb Z).
\]
Then $\nu$ is log-concave; in one dimension it is either a point mass or has a
log-concave density.  The forward implication follows from Pr\'ekopa's theorem.
Indeed, convolution with a uniform
interval shows that $t\mapsto\nu((t-h,t])$ is log-concave, and restriction to
the grid gives the sequence inequality \cite{prekopa1973}.

We sketch the converse.  If a maximal cell mass does not tend to zero, an atom
appears in the limit.  The concavity of the logarithms of the masses, together
with interval support, then shows that no positive mass can stay a fixed
distance from this atom; otherwise the sum over the order $1/h_n$ intervening
cells would be larger than one.  Thus the limit is a point mass.  In the
remaining case let $M_n:=\max_k p_{h_n,a_n,k}\to0$, set
$c_{n,k}:=a_n+(k-\tfrac12)h_n$, let $\widetilde g_n$ be the log-linear
interpolant of $p_{h_n,a_n,k}/h_n$ at the points $c_{n,k}$, and put
$Z_n=\int\widetilde g_n$.  Also define the centered step density
\[
 s_n:=\sum_k\frac{p_{h_n,a_n,k}}{h_n}
       \1_{[c_{n,k},c_{n,k+1})}.
\]
Logarithmic-mean bounds and unimodality give
\[
 1-M_n\leq Z_n\leq1,
 \qquad \|\widetilde g_n-s_n\|_1\leq3M_n.
\]
Since $|1-Z_n|\leq M_n$, it follows that
\[
 \|Z_n^{-1}\widetilde g_n-s_n\|_1\leq4M_n.
\]
The law with density $s_n$ can be coupled with $\nu$ with displacement at most
$3h_n/2$.  Therefore the normalized log-concave interpolants converge weakly
to $\nu$.  The weak closure of the class of log-concave measures gives the
claim \cite{saumard-wellner}.  We stress that this compatibility result is not
used in the optimization arguments.
\end{remark}

We end this section with an exact comparison of the risk profiles.  Let
$\mathcal P_h$ consist of
intervals $I_k=(t_{k-1},t_k]$ of length at most $h$, index its nonempty cells
in increasing order, and set $X_h:=\E[X\mid\sigma(\mathcal P_h)]$.  At every
cell boundary $t_k$ with two nonempty sides, the event $\{X_h\leq t_k\}$ is
$\{X\leq t_k\}$, and conditional Pythagoras gives
\begin{equation}\label{eq:centroidal-risk-identity}
 W_X(t_k)=\E[(X-X_h)^2]+W_{X_h}(t_k).
\end{equation}
Indeed, the cross term between $X-X_h$ and any two-level function of $X_h$ has
conditional mean zero.  The first term in
\eqref{eq:centroidal-risk-identity} does not depend on the cut and is bounded
by $h^2/4$.  Thus centroidal discretization recovers the quadratic profile at
the cell boundaries as the mesh tends to zero.  In contrast with replacement
by cell centers, the decomposition is exact, even though the centroids need
not be equally spaced.

\part{Quadratic risk profiles in several dimensions}

The scalar argument uses the natural order of the real line.  In several
dimensions an order appears only after a direction has been fixed, and points
with the same projection form a fiber rather than a single atom.  The
Pythagorean identity separates the dispersion inside each fiber from the
motion of the fiber barycenters.  We first study this ordered problem and then
return to arbitrary two-regime partitions.

\section{Quadratic vector risk along a fixed direction}
\label{sec:vector-fixed-direction}

We formulate here the fixed-direction problem.  The main observation is that
the within-fiber error is independent of the cut, so the scalar calculation
can be repeated for the ordered barycenters.

Let $X$ be a finitely supported random vector in $\mathbb R^d$, let
$\mu:=\mathbb E X$, and fix a symmetric positive definite matrix $M$.
We write $\langle x,y\rangle_M:=x^{\mathsf T}My$ and
$\|x\|_M^2:=\langle x,x\rangle_M$.  Fix $u\neq0$ and order the distinct
values of
\[
  Y:=u^{\mathsf T}(X-\mu)
\]
as $t_1<\cdots<t_m$.  The probability and the barycenter of the $i$th
fiber are
\[
  p_i:=\mathbb P(Y=t_i),
  \qquad
  z_i:=\mathbb E[X-\mu\mid Y=t_i].
\]
Set $P_i:=\sum_{j\leq i}p_j$ and $q_i:=1-P_i$.  For the nonempty sides
define
\[
  a_i:=\frac1{P_i}\sum_{j\leq i}p_jz_j\quad(1\leq i\leq m),
  \qquad
  b_i:=\frac1{q_i}\sum_{j>i}p_jz_j\quad(0\leq i\leq m-1).
\]
Notice that $a_m=b_0=0$.

Let $V_i$ be the least quadratic risk obtained from the partition
$\{Y\leq t_i\}\mid\{Y>t_i\}$, and put
\[
  V_0=V_m=:T,
  \qquad
  T:=\mathbb E\|X-\mu\|_M^2.
\]
At $i=0,m$ the empty representative is immaterial and the nonempty
representative is $\mu$.

\begin{theorem}[Fixed-direction fiber reduction]
\label{thm:fiber-reduction}
Let $z(Y):=z_i$ on $\{Y=t_i\}$ and
\[
  \sigma^2_{\mathrm{fib}}
  :=\mathbb E\|X-\mu-z(Y)\|_M^2.
\]
Then, for $1\leq i\leq m-1$,
\begin{equation}\label{eq:fiber-decomposition}
  V_i=\sigma^2_{\mathrm{fib}}
      +\sum_{j\leq i}p_j\|z_j-a_i\|_M^2
      +\sum_{j>i}p_j\|z_j-b_i\|_M^2.
\end{equation}
At the endpoints,
\[
 V_0=V_m=T=\sigma^2_{\mathrm{fib}}+\sum_{j=1}^m p_j\|z_j\|_M^2.
\]
Thus the dispersion inside the fibers contributes the same constant to every
cut.  For $1\leq i\leq m-1$, put
\[
  S_i:=\mathbb E[(X-\mu)\mathbf 1_{\{Y\leq t_i\}}]
      =P_i a_i=-q_i b_i,
\]
then
\begin{equation}\label{eq:vector-gain}
  V_i=T-B_i,
  \qquad
  B_i=P_iq_i\|a_i-b_i\|_M^2
     =\frac{\|S_i\|_M^2}{P_iq_i},
\end{equation}
with $B_0=B_m=0$.
\end{theorem}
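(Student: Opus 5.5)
The plan is to reduce everything to the conditional (fiber-wise) Pythagorean identity in the inner product $\langle\cdot,\cdot\rangle_M$. For a fixed cut $i$ with $1\leq i\leq m-1$, the risk of representatives $a,b\in\R^d$ is
\[
 J(a,b)=\E\bigl[\|X-\mu-a\|_M^2\1_{\{Y\leq t_i\}}+\|X-\mu-b\|_M^2\1_{\{Y>t_i\}}\bigr].
\]
The problem is translation invariant, so we may measure representatives relative to $\mu$. The events $\{Y\leq t_i\}$ are $\sigma(Y)$-measurable. Hence on each fiber $\{Y=t_j\}$ we may write $X-\mu-c=(X-\mu-z_j)+(z_j-c)$ for any constant $c$. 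The cross term $\E[\langle X-\mu-z(Y),z(Y)-c\rangle_M\1_{\{Y=t_j\}}]$ vanishes because $\E[X-\mu-z(Y)\mid Y]=0$ by the definition of $z_j$. Summing over fibers gives
\[
 J(a,b)=\sigma^2_{\mathrm{fib}}+\sum_{j\leq i}p_j\|z_j-a\|_M^2+\sum_{j>i}p_j\|z_j-b\|_M^2 .
\]

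The two remaining sums are weighted least-squares problems in $M$-norm, and $M$ is positive definite. Each is therefore strictly convex, and it is minimized at the weighted barycenter, i.e.\ $a=a_i$ and $b=b_i$; this follows by the same Pythagorean splitting around the barycenter. This yields \eqref{eq:fiber-decomposition}. Applying the identity with a single block and representative $\mu$ (i.e.\ $c=0$) gives $T=\sigma^2_{\mathrm{fib}}+\sum_j p_j\|z_j\|_M^2$, which is the endpoint formula.

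For the gain, split the total term $\sum_j p_j\|z_j\|_M^2$ into the two blocks. In each block, the weighted Pythagoras around the block barycenter gives
\[
 \sum_{j\leq i}p_j\|z_j\|_M^2=\sum_{j\leq i}p_j\|z_j-a_i\|_M^2+P_i\|a_i\|_M^2,
\]
and likewise for the right block with $q_i\|b_i\|_M^2$. Therefore $B_i=T-V_i=P_i\|a_i\|_M^2+q_i\|b_i\|_M^2$. Next, $\E[X-\mu]=0$ gives $\sum_j p_jz_j=0$, so $P_ia_i=-q_ib_i=S_i$. Here $S_i=\E[(X-\mu)\1_{\{Y\leq t_i\}}]$ follows by the tower property. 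Substituting $a_i=S_i/P_i$, $b_i=-S_i/q_i$ and using $P_i+q_i=1$ gives
\[
 B_i=\|S_i\|_M^2\Bigl(\frac1{P_i}+\frac1{q_i}\Bigr)=\frac{\|S_i\|_M^2}{P_iq_i}.
\]
Since $a_i-b_i=S_i/(P_iq_i)$, we also get $P_iq_i\|a_i-b_i\|_M^2=\|S_i\|_M^2/(P_iq_i)$. Setting $B_0=B_m=0$ is consistent with $V_0=V_m=T$.

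The only step needing care is the vanishing of the fiber cross term. It relies on the cut being $\sigma(Y)$-measurable, so that the representative is constant on each fiber. This is exactly where the fixed-direction ordering is used. Everything else is routine algebra.
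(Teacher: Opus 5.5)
Your proposal is correct and follows essentially the same route as the paper: fiber-wise conditional Pythagoras with $\varepsilon=X-\mu-z(Y)$, then the within/between split of the barycenters and the centering identity $P_ia_i=-q_ib_i=S_i$. The only cosmetic difference is that you decompose the risk for arbitrary representatives and then minimize, whereas the paper first identifies the optimal representatives as the conditional means $\mu+a_i$ and $\mu+b_i$.
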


\begin{proof}
Set $\varepsilon:=X-\mu-z(Y)$.  We have
$\mathbb E[\varepsilon\mid Y]=0$.  If $c$ is any vector-valued function of
$Y$, then
\[
 \mathbb E\langle\varepsilon,c(Y)\rangle_M
 =\mathbb E\left[
   \left\langle\mathbb E[\varepsilon\mid Y],c(Y)\right\rangle_M
  \right]=0.
\]
Thus the part of $X-\mu$ orthogonal to the fibers has zero cross term with
every quantity which depends only on the fiber.

For the cut after $t_i$, the conditional mean on the left is
\[
 \mathbb E[X\mid Y\leq t_i]
 =\mu+\mathbb E[z(Y)\mid Y\leq t_i]
 =\mu+a_i,
\]
and similarly the conditional mean on the right is $\mu+b_i$.  On the left
side we can therefore write
\[
 X-(\mu+a_i)=\varepsilon+z(Y)-a_i.
\]
Expanding the squared $M$--norm and conditioning on $Y$ gives
\begin{align*}
 &\mathbb E\left[
   \|X-(\mu+a_i)\|_M^2\mathbf 1_{\{Y\leq t_i\}}
  \right]\\
 &\qquad=
 \mathbb E\left[\|\varepsilon\|_M^2
                 \mathbf 1_{\{Y\leq t_i\}}\right]
 +\sum_{j\leq i}p_j\|z_j-a_i\|_M^2,
\end{align*}
because the cross term vanishes.  The same calculation on the right yields
\[
 \mathbb E\left[
   \|X-(\mu+b_i)\|_M^2\mathbf 1_{\{Y>t_i\}}
  \right]
 =\mathbb E\left[\|\varepsilon\|_M^2
                 \mathbf 1_{\{Y>t_i\}}\right]
 +\sum_{j>i}p_j\|z_j-b_i\|_M^2.
\]
Adding the two identities proves \eqref{eq:fiber-decomposition}.  Taking one
regime only, with representative $\mu$, gives
\[
 T=\sigma^2_{\mathrm{fib}}+sum_{j=1}^m p_j\|z_j\|_M^2.
\]

It remains to identify the gain.  The usual within/between decomposition for
the fiber barycenters gives
\begin{align*}
 \sum_{j=1}^m p_j\|z_j\|_M^2
 &=\sum_{j\leq i}p_j\|z_j-a_i\|_M^2
   +\sum_{j>i}p_j\|z_j-b_i\|_M^2\\
 &\quad+P_i\|a_i\|_M^2+q_i\|b_i\|_M^2.
\end{align*}
Consequently,
\[
 V_i=T-\left(P_i\|a_i\|_M^2+q_i\|b_i\|_M^2\right).
\]
Since the barycenters are centered,
\[
 P_i a_i+q_i b_i=\sum_{j=1}^m p_jz_j
 =\mathbb E(X-\mu)=0.
\]
Using this identity, we obtain
\[
 P_i\|a_i\|_M^2+q_i\|b_i\|_M^2
 =P_iq_i\|a_i-b_i\|_M^2.
\]
Finally, $S_i=P_i a_i=-q_i b_i$, and hence
\[
 a_i-b_i=\frac{S_i}{P_i}+\frac{S_i}{q_i}
          =\frac{S_i}{P_iq_i}.
\]
Therefore
\[
 P_iq_i\|a_i-b_i\|_M^2
 =\frac{\|S_i\|_M^2}{P_iq_i},
\]
which proves all the identities in \eqref{eq:vector-gain}.
\end{proof}

We introduce now the vector form of the marginal signal.
Set
\begin{align}
  L_1^2&:=0,
  &L_i^2&:=\langle z_i-a_{i-1},z_i-a_i\rangle_M
       =\frac{P_{i-1}}{P_i}\|z_i-a_{i-1}\|_M^2,\quad 2\leq i\leq m,
       \label{eq:left-vector-radius}\\
  R_m^2&:=0,
  &R_i^2&:=\langle b_{i-1}-z_i,b_i-z_i\rangle_M
       =\frac{q_i}{q_{i-1}}\|b_i-z_i\|_M^2,\quad 1\leq i\leq m-1.
       \label{eq:right-vector-radius}
\end{align}
These quantities are nonnegative, including at the endpoints.  Define
\begin{equation}\label{eq:vector-signal}
  H_i:=R_i^2-L_i^2,
  \qquad 1\leq i\leq m.
\end{equation}

\begin{proposition}[Vector marginal identity]
\label{prop:vector-marginal-identity}
For $1\leq i\leq m$,
\begin{equation}\label{eq:vector-risk-increment}
  V_i-V_{i-1}=p_i(L_i^2-R_i^2)=-p_iH_i.
\end{equation}
Consequently, the polygon joining $(P_i,V_i)$ is convex if and only if
\begin{equation}\label{eq:weak-vector-monotonicity}
  H_1\geq H_2\geq\cdots\geq H_m.
\end{equation}
A sufficient, but stronger, condition is that $(L_i^2)$ is nondecreasing
and $(R_i^2)$ is nonincreasing.
\end{proposition}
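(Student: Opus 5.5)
By Theorem~\ref{thm:fiber-reduction}, the constant $\sigma^2_{\mathrm{fib}}$ is the same for every cut. So $V_i-V_{i-1}$ equals the change in the weighted within-group sum of squares of the barycenter configuration $(p_j,z_j)$ when the fiber $z_i$ moves from the right block to the left block. The plan is to repeat the interior argument of Proposition~\ref{prop:atom-update} verbatim in the Hilbert space $(\R^d,\langle\cdot,\cdot\rangle_M)$, handle the two endpoints with the gain formula \eqref{eq:vector-gain}, and then read off convexity exactly as in Corollary~\ref{cor:mass-polygon}.

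\textbf{Interior indices, $2\leq i\leq m-1$.} I will use the standard weighted-variance update. Suppose a cloud has mass $P$ and centroid $a$, and we add a point $z$ of mass $p$. The new centroid is $a'=(Pa+pz)/(P+p)$. The optimal sum of squares increases by
\[
 \frac{Pp}{P+p}\|z-a\|_M^2 = p\langle z-a,\,z-a'\rangle_M ,
\]
because $z-a'=\frac{P}{P+p}(z-a)$. With $P=P_{i-1}$, $a=a_{i-1}$, $a'=a_i$ and $z=z_i$, the left cost increases by $p_iL_i^2$, and both expressions in \eqref{eq:left-vector-radius} come out of this identity. On the right, removing $z_i$ from the block with centroid $b_{i-1}$ leaves the block with centroid $b_i$ and mass $q_i$. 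Reading the same identity backwards, the right cost drops by $p_i\frac{q_i}{q_{i-1}}\|z_i-b_i\|_M^2=p_iR_i^2$, and $z_i-b_{i-1}=\frac{q_i}{q_{i-1}}(z_i-b_i)$ gives the inner-product form in \eqref{eq:right-vector-radius}. Subtracting the two changes gives \eqref{eq:vector-risk-increment}.

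\textbf{Endpoints.} For $i=1$ we have $V_1-V_0=-B_1$, and \eqref{eq:vector-gain} gives
\[
 B_1=p_1q_1\|z_1-b_1\|_M^2/(p_1+q_1)=p_1R_1^2 ,
\]
since $p_1+q_1=q_0=1$. Together with $L_1=0$ this is the claim. For $i=m$ we have $V_m-V_{m-1}=B_{m-1}$. Here
\[
 B_{m-1}=P_{m-1}p_m\|a_{m-1}-z_m\|_M^2=p_mL_m^2 ,
\]
since $P_m=1$, and $R_m=0$. Alternatively, both endpoints follow from the same add/remove identity applied to the one-regime configuration with centroid $0=a_m=b_0$.

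\textbf{Convexity and the sufficient condition.} Dividing \eqref{eq:vector-risk-increment} by $p_i=P_i-P_{i-1}$ shows that the slope of the polygon on $[P_{i-1},P_i]$ is $-H_i$. A polygon is convex exactly when its consecutive slopes are nondecreasing, which is \eqref{eq:weak-vector-monotonicity}. The stronger monotonicity of $(L_i^2)$ and $(R_i^2)$ trivially implies that $H_i$ is nonincreasing.

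\textbf{Main obstacle.} The only step needing care is the bookkeeping in the endpoint and index conventions, namely that $a_m=b_0=0$ and $q_0=P_m=1$ make the boundary formulas match the interior ones. That is the place I would double-check. The nonnegativity of $L_i^2$ and $R_i^2$ is automatic from the squared-norm forms.
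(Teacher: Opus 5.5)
Your proof is correct and takes essentially the paper's route: the same weighted sum-of-squares add/remove identity applied to the fiber barycenters, with $\sigma^2_{\mathrm{fib}}$ constant by Theorem~\ref{thm:fiber-reduction}, followed by reading $-H_i$ as the slope on $[P_{i-1},P_i]$. Your separate endpoint check via \eqref{eq:vector-gain}, using $q_0=P_m=1$ and $a_m=b_0=0$, only makes explicit what the paper's one-line proof leaves implicit, as in the endpoint step of Proposition~\ref{prop:atom-update}.
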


\begin{proof}
When the $i$th fiber is moved to the left, the optimal left cost increases by
$p_iL_i^2$, while the optimal right cost decreases by $p_iR_i^2$.  This is the
same weighted sum-of-squares identity used in the scalar case and proves
\eqref{eq:vector-risk-increment}.  The slope on $[P_{i-1},P_i]$ is therefore
$-H_i$.  The last assertion follows immediately.
\end{proof}

Under \eqref{eq:weak-vector-monotonicity}, put
$\widehat H_0:=+\infty$, $\widehat H_i:=H_i$ for $1\leq i\leq m$, and
$\widehat H_{m+1}:=-\infty$.  A vertex $i$ minimizes the polygon precisely
when
\begin{equation}\label{eq:vector-crossing-test}
  \widehat H_i\geq0\geq \widehat H_{i+1}.
\end{equation}
The minimizing vertex is unique if both inequalities in
\eqref{eq:vector-crossing-test} are strict.
If $H_i=0$, the corresponding edge is flat and the two adjacent partitions
have the same value.  Notice that this can happen even though $M$ is strictly
positive definite.

\subsection{An anchored weak symmetry}
\label{subsec:anchored-vector-symmetry}

We need only a weak symmetry in the chosen direction, not invariance of the
whole distribution under $x\mapsto2\mu-x$.

\begin{definition}[Vector weak symmetry at the anchor]
\label{def:vector-weak-symmetry}
The direction $u$ is said to be weakly symmetric at the central anchor if,
for $Y=u^{\mathsf T}(X-\mu)$,
\begin{equation}\label{eq:vector-anchor}
  \mathbb P(Y<0)=\mathbb P(Y>0)=q>0,
  \qquad
  \mathbb E[(X-\mu)\mathbf 1_{\{Y=0\}}]=0.
\end{equation}
We write $r:=\mathbb P(Y=0)$, so that $2q+r=1$.
\end{definition}

Let
$A:=\mathbb E[(X-\mu)\mathbf 1_{\{Y<0\}}]$.  Centering and
\eqref{eq:vector-anchor} imply that the corresponding positive-side
moment is $-A$.  Moving the zero fiber, if it is present, has the common
left and right squared marginal radius
\begin{equation}\label{eq:central-common-radius}
  L(0)^2=R(0)^2
  =\frac{\|A\|_M^2}{q(q+r)},
  \qquad H(0)=0.
\end{equation}
When $r=0$, \eqref{eq:central-common-radius} is interpreted as a virtual
zero-mass index placed between the negative and positive fibers: the original
signals are left unchanged and the inserted quantities are defined by
\[
 L_*^2=R_*^2=\frac{\|A\|_M^2}{q^2},\qquad H_*=0.
\]

\begin{theorem}[Central cut under weak symmetry]
\label{thm:anchored-vector-cut}
Assume \eqref{eq:vector-anchor}, augment the ordered signal by the actual
or virtual zero fiber, and suppose that $H$ is nonincreasing on this
augmented order.  Then the central partition minimizes the fixed-direction
quadratic risk.  More precisely:
\begin{enumerate}
\item if $r=0$, $\{Y<0\}\mid\{Y>0\}$ is minimizing;
\item if $r>0$, both
$\{Y<0\}\mid\{Y\geq0\}$ and
$\{Y\leq0\}\mid\{Y>0\}$ are minimizing and have the same risk.
\end{enumerate}
If $H$ is strictly positive on every negative fiber and strictly negative
on every positive fiber, these are the only minimizing partitions.  Thus
there is one minimizing partition when $r=0$ and exactly two when $r>0$.
\end{theorem}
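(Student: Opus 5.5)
We prove Theorem~\ref{thm:anchored-vector-cut} by the crossing test \eqref{eq:vector-crossing-test}, applied to the polygon $(P_i,V_i)$ of Proposition~\ref{prop:vector-marginal-identity}. The plan has two parts. First, we check \eqref{eq:central-common-radius}, so that the zero fiber (actual or virtual) carries signal $H=0$. Second, we use the assumed monotonicity of $H$ on the augmented order: every fiber before the anchor has $H\ge0$, and every fiber after it has $H\le0$. The central partition then sits at the bottom of a convex polygon.

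\textbf{The central radius.} Suppose $r>0$ and the zero fiber has index $k$, so $t_k=0$. Then $P_{k-1}=q$, $P_k=q+r$, and $S_{k-1}=A$. The anchor condition $\E[(X-\mu)\1_{\{Y=0\}}]=0$ gives $z_k=0$ and $S_k=A$, hence
\[
a_{k-1}=A/q,\qquad a_k=A/(q+r).
\]
Formula \eqref{eq:left-vector-radius} then gives
\[
L_k^2=\langle -A/q,\,-A/(q+r)\rangle_M=\|A\|_M^2/(q(q+r)).
\]
On the right, $q_{k-1}=q+r$ and $q_k=q$. The right sums are $-S_{k-1}=-A$ and $-S_k=-A$, so $b_{k-1}=-A/(q+r)$ and $b_k=-A/q$. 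This gives $R_k^2=\|A\|_M^2/(q(q+r))$, and therefore $H_k=0$.

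This also explains the two cases. When $r>0$, Proposition~\ref{prop:vector-marginal-identity} gives $V_k-V_{k-1}=-p_kH_k=0$, so the two partitions $\{Y<0\}\mid\{Y\ge0\}$ and $\{Y\le0\}\mid\{Y>0\}$ have equal risk. The same follows independently from \eqref{eq:vector-gain}: both cuts have $\|S\|_M^2=\|A\|_M^2$ and $P_iq_i=q(q+r)$. When $r=0$, inserting a zero-mass vertex with $H_*=0$ changes no $V_i$, so the augmented polygon is the original one. In that case the anchor is the single vertex at $P=q$.

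\textbf{Global minimality.} Because $H$ is nonincreasing on the augmented order and vanishes at the anchor, $H_i\ge0$ for every negative fiber and $H_i\le0$ for every positive fiber. By \eqref{eq:vector-risk-increment}, $V$ is nonincreasing up to the anchor vertex or vertices and nondecreasing afterwards. Equivalently, \eqref{eq:vector-crossing-test} holds at the central vertex or vertices. This proves parts (1) and (2).

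For uniqueness, assume strict signs: $H>0$ on negative fibers and $H<0$ on positive fibers. Then every increment of $V$ before the anchor is strictly negative and every increment after it is strictly positive, since $p_i>0$. The only minimizers are therefore the anchor vertex when $r=0$, and the two endpoints of the flat central edge when $r>0$.

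The main obstacle is bookkeeping rather than analysis. The virtual index must be shown not to disturb the original increments: the $R$ and $L$ of the neighbouring real fibers are unchanged, and the zero mass contributes nothing to $V$. One must also check that the monotonicity assumption on the augmented sequence is exactly what delivers the sign pattern used above. The one-sided endpoint conventions ($L_1=0$, $R_m=0$) need no special treatment, because $q>0$ guarantees that both sides contain real fibers.
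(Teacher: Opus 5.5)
Your proposal is correct and follows the paper's own argument. You first show that the actual or virtual zero fiber carries signal $H=0$ via \eqref{eq:central-common-radius}. Then \eqref{eq:vector-risk-increment}, together with the monotonicity of $H$, makes the increments nonpositive before the anchor and nonnegative after it, and strict signs give uniqueness. Your explicit computation of $a_{k-1},a_k,b_{k-1},b_k$ simply writes out the substitution step that the paper leaves implicit.
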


\begin{proof}
If the zero fiber is present, its conditional barycenter is zero by
\eqref{eq:vector-anchor}.  Substitution gives
\eqref{eq:central-common-radius}.  If $r=0$, the virtual insertion defines the
anchor signal without changing any of the original increments.  In both
cases the signal at the anchor is zero.  Since $H$ is nonincreasing, all the
increments in \eqref{eq:vector-risk-increment} before the anchor are
nonpositive and all those after the anchor are nonnegative.  When $r>0$, the
increment of the zero fiber is itself zero.  The strict assertion follows
from the strict signs away from the anchor.
\end{proof}

\section{From a fixed direction to all two-regime partitions}
\label{sec:global-halfspace}

We now remove the fixed direction.  The result below shows that, under
quadratic loss, it is enough to consider affine halfspaces.  The reason is the
usual nearest-centroid geometry.

For a Borel partition $A\mid A^c$ with $0<\mathbb P(X\in A)<1$, let
\begin{equation}\label{eq:global-binary-risk}
  \mathcal V(A):=\min_{a,b\in\mathbb R^d}
  \mathbb E\bigl[
     \|X-a\|_M^2\mathbf 1_{\{X\in A\}}
    +\|X-b\|_M^2\mathbf 1_{\{X\notin A\}}
  \bigr].
\end{equation}

\begin{proposition}[Scope of the halfspace search]
\label{prop:global-halfspace}
Assume that $X$ has finite, nonconstant support.  A global minimizer of
\eqref{eq:global-binary-risk} exists and may be chosen to be an affine
halfspace.  More precisely, let $A$ be a global minimizer and let
$a=\mathbb E[X\mid X\in A]$ and $b=\mathbb E[X\mid X\notin A]$.  Then
$a\neq b$ and, up to the law of $X$,
\begin{equation}\label{eq:nearest-centroid-sandwich}
 \bigl\{x:\langle M(b-a),x-(a+b)/2\rangle<0\bigr\}
 \subseteq A\subseteq
 \bigl\{x:\langle M(b-a),x-(a+b)/2\rangle\leq0\bigr\}.
\end{equation}
Thus $A$ is a halfspace whenever the bisecting hyperplane carries no
mass; in general, only the points on that hyperplane may be allocated
arbitrarily.  Assigning all bisector mass to one side, while retaining two
nonempty cells, produces a global halfspace minimizer.  Its normal is
$M(b-a)$.
\end{proposition}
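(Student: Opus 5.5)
Since $X$ has finite support $\{x_1,\dots,x_n\}$, only the trace of $A$ on the support matters.  There are finitely many such traces (at most $2^n$), and because the support is nonconstant at least one gives two nonempty cells.  Hence the minimum of $\mathcal V$ over admissible partitions is attained at some $A$, and we fix such a global minimizer.  For fixed $A$ the inner minimization in \eqref{eq:global-binary-risk} is solved uniquely by the conditional means $a$ and $b$, exactly as in the scalar case.

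We first show that $a\neq b$.  If $a=b$, then $a=b=\mu$ and $\mathcal V(A)=\mathbb E\|X-\mu\|_M^2=T$.  By Theorem~\ref{thm:fiber-reduction}, applied with any direction $u$ for which $Y=u^{\mathsf T}(X-\mu)$ is nonconstant (such $u$ exists since $X$ is nonconstant), the cut after $t_1$ has gain $B_1=\|S_1\|_M^2/(P_1q_1)$.  Here $S_1=p_1z_1$, and $z_1\neq0$ because $u^{\mathsf T}z_1=t_1<0$.  Thus $B_1>0$, so some partition has risk strictly below $T$, contradicting minimality.

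Next comes the reassignment (Lloyd) step.  Consider the partition $A'$ assigning each support point to the nearer of $a$ and $b$ in $\|\cdot\|_M$, with ties resolved as in $A$.  Then
\[
\mathcal V(A')\le \mathbb E\bigl[\min(\|X-a\|_M^2,\|X-b\|_M^2)\bigr]\le \mathcal V(A),
\]
and the second inequality is strict as soon as some atom $x$ of positive mass lies in $A$ with $\|x-b\|_M<\|x-a\|_M$, or vice versa.  We must check that $A'$ is admissible, meaning it has two nonempty cells.  If instead all mass went to, say, $a$, then the bound $\mathbb E\|X-a\|_M^2\ge T$ combined with $\mathcal V(A)<T$ (which holds by the previous paragraph, since the minimum is at most $T-B_1$) gives a contradiction.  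So strict misallocation is impossible.  The identity
\[
\|x-a\|_M^2-\|x-b\|_M^2=2\langle M(b-a),x-(a+b)/2\rangle
\]
converts "nearer to $a$" into the sign condition, and this yields \eqref{eq:nearest-centroid-sandwich} up to null sets.

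Finally, take $H$ to be the closed halfspace $\{\langle M(b-a),x-(a+b)/2\rangle\le0\}$, or its open version, keeping whichever choice leaves both cells nonempty.  This is possible because the strict sides cannot both be empty: if they were, all mass would lie on the bisector, and then $\mathbb E\langle M(b-a),X\rangle$ would equal the value of $\langle M(b-a),\cdot\rangle$ at $(a+b)/2$.  That is incompatible with $a,b$ being means of cells lying on the bisector, since $\langle M(b-a),b-a\rangle>0$.  Re-evaluating at the fixed centers $a,b$ gives $\mathcal V(H)\le\mathbb E\min(\cdots)=\mathcal V(A)$, because bisector points are equidistant from $a$ and $b$.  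Hence $H$ is a global minimizer with normal $M(b-a)$.

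The main obstacle I expect is the admissibility bookkeeping: both the exclusion of $a=b$ and the guarantee that the reassigned cells stay nonempty.
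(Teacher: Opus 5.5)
Your proof is correct and follows the paper's route. Existence comes from finiteness, a split of positive gain forces $a\neq b$, nearest-centroid reassignment with the bisector identity gives the sandwich, and the bisector mass is then reallocated at the fixed representatives $a,b$.

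The only soft spot is the final step. There, ``the strict sides cannot both be empty'' is not enough on its own: a null strict side together with a null bisector would also leave an empty cell. You need to combine it with the sandwich and $0<\Pp(X\in A)<1$. The paper avoids this by noting that $h(x):=\|x-a\|_M^2-\|x-b\|_M^2$ satisfies $\E[h(X)\mid X\in A]=-\|a-b\|_M^2<0<\|a-b\|_M^2=\E[h(X)\mid X\notin A]$. Hence each cell has mass strictly on its own side, and either halfspace works.
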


\begin{proof}
Since the support is finite, there are only finitely many partitions and a
minimizer exists.  Moreover, nonconstancy gives a split with positive
between-regime gain.  Thus a global minimizer has smaller risk than the
one-regime representation, and in particular $a\neq b$.

Fix now the two representatives $a$ and $b$.  Every support point must be
assigned to a nearest representative; otherwise moving this point to the
other cell would strictly decrease the risk.  Since
\[
  \|x-a\|_M^2-\|x-b\|_M^2
  =2\langle M(b-a),x-(a+b)/2\rangle,
\]
we obtain \eqref{eq:nearest-centroid-sandwich} on the support.  Denote the
left-hand side of the last identity by $h(x)$.  Conditional centering gives
\[
 \E[h(X)\mid X\in A]=-\|a-b\|_M^2,
 \qquad
 \E[h(X)\mid X\notin A]=\|a-b\|_M^2.
\]
It follows that each cell has positive mass strictly on its own side of the
bisector.  We may therefore assign all the atoms on the bisector to either
side without emptying the other cell.  This does not change the loss for the
fixed representatives.  Reoptimizing cannot increase the loss, and global
minimality excludes a strict decrease.  The new partition is consequently a
global minimizer and is a halfspace up to the law of $X$.
\end{proof}

The proposition shows how the fixed-direction theory enters the global
problem: one may minimize first over the ordered cuts and then over the
direction $u$.  The nearest-centroid and within/between sum-of-squares
identities are standard in least-squares clustering and quantization; see, for
example,
\cite{du-faber-gunzburger,muresan-effros}.
Here we use them for a two-regime risk representation, and $H_i$ is interpreted
as a marginal risk signal.  In dimension one, maximizing the between-regime
variance is exactly Otsu's thresholding criterion \cite{otsu}.

We should stress that the halfspace conclusion is particular to quadratic
loss.  For a nonquadratic translation loss, the boundary is determined by the
equality of two translated loss functions and need not be affine.

\section{Multidimensional examples and counterexamples}
\label{sec:vector-examples}

We finish the vector discussion with two examples.  The first one shows that
an anchor or central symmetry does not replace the monotonicity of the
marginal signal.  The second one shows that the optimal direction need not be
unique.

\begin{example}[The anchor alone is insufficient]
\label{ex:central-symmetry-insufficient}
Let $M=I_2$ and let $X$ be uniform on
\[
  \{\pm(10,0),\ \pm(0,1),\ \pm(1,1)\}.
\]
The law is centrally symmetric and has mean zero.  For $u=(1,0)^{\mathsf T}$,
the negative and positive sides both have mass $1/3$, while the zero fiber
has mass $1/3$ and barycenter zero.  Thus \eqref{eq:vector-anchor} holds.
However, the augmented signal is
\[
  H=\left(120,-\frac{57}{2},0,\frac{57}{2},-120\right),
\]
which is not nonincreasing.  Each central partition has gain $61/4$,
whereas isolating $(10,0)$, or symmetrically $(-10,0)$, has gain $20$.
We can prove global optimality without listing all the partitions.  For a cell
$C$ of size $k\leq3$, symmetry gives
\[
 B(C)=\frac{\left\|\sum_{x\in C}x\right\|^2}{k(6-k)}.
\]
The largest squared subset sums for $k=1,2,3$ are respectively
$100,122,125$, and hence the largest possible gains are
$20,61/4,125/9$.  Thus the two extreme-point partitions are the global
minimizers.  In particular, neither central symmetry nor the anchor condition
can replace the monotonicity of the marginal signal.
\end{example}

\begin{example}[The globally optimal direction need not be unique]
\label{ex:direction-nonunique}
Let $M=I_2$ and let $X$ be uniform on the four vertices
$(\pm1,\pm1)$ of the square.  The total quadratic risk is $T=2$.
The vertical and horizontal splits each have two centroids at distance $2$,
so each has between-regime gain $1$ and residual risk $1$.
Every one-versus-three split has gain $2/3$, while a diagonal two-versus-two
split has gain zero.  Hence the horizontal and vertical partitions are the
two global minimizers, up to exchanging the cells.  Each fixed-direction
profile is simple, but the scalar ordering does not distinguish between the
two global normals.
\end{example}

\section{Conclusion}\label{sec:conclusion}

We have studied the quadratic two-regime problem first for finite laws on the
line and then for finitely supported random vectors.  In the scalar case, the
exact atom update gives the derivative of the profile in cumulative-mass
coordinates.  Its monotonicity is precisely the convexity of the profile.
Log-concavity gives useful sufficient conditions, while weak symmetry locates
the minimum once the correction at the boundary atoms is taken into account.

The continuous derivative has the same residual form, and the fine-grid
results explain the relation between the discrete and atomless settings.  In
several dimensions, conditional Pythagoras reduces a fixed-direction problem
to the ordered fiber barycenters, while nearest-centroid geometry produces a
global halfspace optimizer.  The examples show that support geometry,
monotonicity and the choice of direction remain essential.  The arguments also
make clear why the main conclusions are particular to quadratic loss.

\bibliographystyle{amsplain}
\bibliography{Discrete_7_arxiv}

@article{artzner-delbaen-eber-heath,
  author  = {Artzner, Philippe and Delbaen, Freddy and Eber, Jean-Marc and Heath, David},
  title   = {Coherent Measures of Risk},
  journal = {Mathematical Finance},
  volume  = {9},
  number  = {3},
  pages   = {203--228},
  year    = {1999},
  doi     = {10.1111/1467-9965.00068}
}

@misc{nistor-popescu-continuous,
  author        = {Nistor, Mihaela-Adriana and Popescu, Ionel},
  title         = {A Property of Log-Concave and Weakly-Symmetric Distributions for Two Step Approximations of Random Variables},
  year          = {2026},
  eprint        = {2603.12767},
  archiveprefix = {arXiv},
  howpublished  = {arXiv:2603.12767},
  url           = {https://arxiv.org/abs/2603.12767}
}

@article{otsu,
  author  = {Otsu, Nobuyuki},
  title   = {A Threshold Selection Method from Gray-Level Histograms},
  journal = {IEEE Transactions on Systems, Man, and Cybernetics},
  volume  = {9},
  number  = {1},
  pages   = {62--66},
  year    = {1979},
  doi     = {10.1109/TSMC.1979.4310076}
}

@article{trushkin1982,
  author  = {Trushkin, A. V.},
  title   = {Sufficient Conditions for Uniqueness of a Locally Optimal Quantizer for a Class of Convex Error Weighting Functions},
  journal = {IEEE Transactions on Information Theory},
  volume  = {28},
  number  = {2},
  pages   = {187--198},
  year    = {1982},
  doi     = {10.1109/TIT.1982.1056480}
}

@article{kieffer1983,
  author  = {Kieffer, John C.},
  title   = {Uniqueness of Locally Optimal Quantizer for Log-Concave Density and Convex Error Weighting Function},
  journal = {IEEE Transactions on Information Theory},
  volume  = {29},
  number  = {1},
  pages   = {42--47},
  year    = {1983},
  doi     = {10.1109/TIT.1983.1056622}
}

@article{muresan-effros,
  author  = {Muresan, Dan and Effros, Michelle},
  title   = {Quantization as Histogram Segmentation: Optimal Scalar Quantizer Design in Network Systems},
  journal = {IEEE Transactions on Information Theory},
  volume  = {54},
  number  = {1},
  pages   = {344--366},
  year    = {2008},
  doi     = {10.1109/TIT.2007.911170}
}

@article{johnson-goldschmidt,
  author  = {Johnson, Oliver and Goldschmidt, Christina},
  title   = {Preservation of Log-Concavity on Summation},
  journal = {ESAIM: Probability and Statistics},
  volume  = {10},
  pages   = {206--215},
  year    = {2006},
  doi     = {10.1051/ps:2006008}
}

@article{du-faber-gunzburger,
  author  = {Du, Qiang and Faber, Vance and Gunzburger, Max},
  title   = {Centroidal {Voronoi} Tessellations: Applications and Algorithms},
  journal = {SIAM Review},
  volume  = {41},
  number  = {4},
  pages   = {637--676},
  year    = {1999},
  doi     = {10.1137/S0036144599352836}
}

@article{duembgen-kolesnyk-wilke,
  author  = {D{\"u}mbgen, Lutz and Kolesnyk, Petro and Wilke, Ralf A.},
  title   = {Bi-Log-Concave Distribution Functions},
  journal = {Journal of Statistical Planning and Inference},
  volume  = {184},
  pages   = {1--17},
  year    = {2017},
  doi     = {10.1016/j.jspi.2016.10.005}
}

@article{prekopa1973,
  author  = {Pr{\'e}kopa, Andr{\'a}s},
  title   = {On Logarithmic Concave Measures and Functions},
  journal = {Acta Scientiarum Mathematicarum (Szeged)},
  volume  = {34},
  pages   = {335--343},
  year    = {1973}
}

@article{saumard-wellner,
  author  = {Saumard, Adrien and Wellner, Jon A.},
  title   = {Log-Concavity and Strong Log-Concavity: A Review},
  journal = {Statistics Surveys},
  volume  = {8},
  pages   = {45--114},
  year    = {2014},
  doi     = {10.1214/14-SS107}
}

@article{bagnoli-bergstrom,
  author  = {Bagnoli, Mark and Bergstrom, Ted},
  title   = {Log-Concave Probability and Its Applications},
  journal = {Economic Theory},
  volume  = {26},
  number  = {2},
  pages   = {445--469},
  year    = {2005},
  doi     = {10.1007/s00199-004-0514-4}
}

@article{frittelli-rosazza,
  author  = {Frittelli, Marco and Rosazza Gianin, Emanuela},
  title   = {Putting Order in Risk Measures},
  journal = {Journal of Banking \& Finance},
  volume  = {26},
  number  = {7},
  pages   = {1473--1486},
  year    = {2002},
  doi     = {10.1016/S0378-4266(02)00270-4}
}

@article{foellmer-weber,
  author  = {F{\"o}llmer, Hans and Weber, Stefan},
  title   = {The Axiomatic Approach to Risk Measures for Capital Determination},
  journal = {Annual Review of Financial Economics},
  volume  = {7},
  pages   = {301--337},
  year    = {2015},
  doi     = {10.1146/annurev-financial-111914-042031}
}

@article{rockafellar-uryasev-zabarankin-2006,
  author  = {Rockafellar, R. Tyrrell and Uryasev, Stan and Zabarankin, Michael},
  title   = {Generalized Deviations in Risk Analysis},
  journal = {Finance and Stochastics},
  volume  = {10},
  number  = {1},
  pages   = {51--74},
  year    = {2006},
  doi     = {10.1007/s00780-005-0165-8}
}

@article{fisher-1958,
  author  = {Fisher, Walter D.},
  title   = {On Grouping for Maximum Homogeneity},
  journal = {Journal of the American Statistical Association},
  volume  = {53},
  number  = {284},
  pages   = {789--798},
  year    = {1958},
  doi     = {10.1080/01621459.1958.10501479}
}

@article{lloyd-1982,
  author  = {Lloyd, Stuart P.},
  title   = {Least Squares Quantization in {PCM}},
  journal = {IEEE Transactions on Information Theory},
  volume  = {28},
  number  = {2},
  pages   = {129--137},
  year    = {1982},
  doi     = {10.1109/TIT.1982.1056489}
}

\end{document}